\theoremstyle{plain}
\newtheorem{theorem}{Theorem}[section]
\newtheorem{lemma}[theorem]{Lemma}
\newtheorem{proposition}[theorem]{Proposition}
\newtheorem{corollary}[theorem]{Corollary}
\theoremstyle{definition}
\newtheorem{remark}[theorem]{Remark}
\newtheorem{example}[theorem]{Example}
\numberwithin{equation}{section}
\newcommand\N{\mathbb{N}}
\newcommand\R{\mathbb{R}}
\newcommand\C{\mathbb{C}}
\renewcommand\S{\mathcal{S}}
\renewcommand\H{\mathcal{H}}
\newcommand{\maclA}{\mathcal{A}}
\newcommand\B{\mathfrak{B}}
\newcommand\F{\mathcal{F}}
\newcommand\ev[2]{\langle#1,#2\rangle}
\DeclareMathOperator{\spn}{span}
\begin{document}

\title[Spaces of functions with nearly optimal time-frequency decay]{Hermite expansions for spaces of functions with nearly optimal time-frequency decay}

\author[L. Neyt]{Lenny Neyt}
\thanks{L. Neyt gratefully acknowledges support from the Alexander von Humboldt Foundation and the Research Foundation--Flanders through the postdoctoral grant 12ZG921N}
\address{Universit\"{a}t Trier\\ FB IV Mathematik\\ D-54286 Trier\\ Germany}
\email{lenny.neyt@UGent.be}

\author[J. Toft]{Joachim Toft}
\thanks{J. Toft was supported by Vetenskapsr{\aa}det, project number 2019-04890.}
\address{Department of Mathematics \\ Linn\ae us University \\  Växjö \\ Sweden}
\email{joachim.toft@lnu.se}

\author[J. Vindas]{Jasson Vindas}
\thanks{The work of J. Vindas was supported by the Research Foundation--Flanders, through the FWO-grant number G067621N}
\address{Department of Mathematics: Analysis, Logic and Discrete Mathematics \\ Ghent University \\ Krijgslaan 281 \\ 9000 Gent \\ Belgium}
\email{jasson.vindas@UGent.be}

\subjclass[2020]{ 46E10; 42A38; 42C10; 30H20}
\keywords{Functions with nearly optimal time-frequency decay; Bargmann transform; Phragm\'{e}n-Lindel\"{o}f principle on sectors; Hermite series expansions.}

\begin{abstract}
We establish Hermite expansion characterizations for several subspaces
of the Fr\'{e}chet space of functions on the real line satisfying
\begin{equation*}
|f(x)| \lesssim e^{-(\frac{1}{2} - \lambda ) x^{2}} , \qquad |
\widehat{f}(\xi )| \lesssim e^{-(\frac{1}{2} - \lambda ) \xi ^{2}} ,
\qquad \forall \lambda > 0 .
\end{equation*}
In particular, we extend and improve Fourier characterizations
of the so-called proper Pilipovi\'{c} spaces obtained in \cite{T-G-FourierCharPilipovicSp}. The main ingredients in our proofs are the Bargmann
transform and some achieved optimal forms of the Phragm\'{e}n-Lindel\"{o}f principle.
\end{abstract}

\maketitle

\section{Introduction}
A convenient way to characterize the smallest Fourier invariant classical Gelfand-Shilov space
$\mathcal{S}_{1/2} (\mathbb{R})$ \cite{Gelfand-ShilovII} is given by Gaussian estimates of the
involved functions and their Fourier transforms. In fact, in \cite{Gelfand-ShilovII,CCK96}
it is proven that  a function or distribution $f$ on $\mathbb R$ belongs
to $\mathcal{S}_{1/2} (\mathbb{R})$ 
if and only if
\begin{equation}
\label{eq:S1/2def}
|f(x)| \lesssim e^{-\eta x^{2}}  \quad \mbox{and}\quad  |\widehat{f}(\xi)| \lesssim e^{- \eta \xi^{2}} 
\qquad \mbox {for some }\eta> 0.
\end{equation}
If we choose the Fourier transform as
$$
\widehat{f}(\xi) = (2\pi )^{-\frac 12}\int_{-\infty}^{\infty} f(t) e^{- i t \xi}\, dt,
$$
then according to Hardy's uncertainty principle \cite{Hardy33} (cf. \cite{FS1997,Jaming2006}),  we must have $0<\eta\leq1/2$
if \eqref{eq:S1/2def} holds for some non-identically zero function $f\in \mathcal{S}_{1/2}
(\mathbb{R})$. Furthermore, in the extreme case $\eta=1/2$, the function $f$ must necessarily be a constant 
multiple of the normalized Gauss function $\phi(x):= \pi^{-1/4} e^{-\frac{1}{2} x^{2}}$.

\par

In this article, we shall study spaces of functions for which the optimal time-frequency decay
in \eqref{eq:S1/2def} is almost reached. We first consider the Fr\'{e}chet space of functions
on $\R$ such that  	
\begin{equation}
\label{eq:H01/2TFDecay}
|f(x)| \lesssim e^{-(\frac{1}{2} - \lambda) x^{2}} 
\quad \mbox{and}\quad
|\widehat{f}(\xi)| \lesssim e^{-(\frac{1}{2} - \lambda) \xi^{2}} ,
\qquad \forall \lambda > 0 .
\end{equation}
Naturally, the elements of our space extend to entire functions on $\mathbb{C}$.
It is recently suggested in \cite[p.~4]{T-G-FourierCharPilipovicSp} that this Fr\'{e}chet space might coincide with the subspace of $L^{2}(\mathbb{R})$ consisting of functions such that their Hermite series coefficients are of rapid exponential decay. The first aim of this paper is to give an affirmative answer to this conjecture (see Theorem \ref{t:TFCharH01/2} below). Let 
	\[ h_{n}(x) = (-1)^{n} \pi^{-\frac{1}{4}} (2^{n} n!)^{-1/2} e^{\frac{1}{2} x^{2}} \frac{d^{n}}{dx^{n}} [e^{-x^{2}}] , \qquad n\in\N,\]
be the Hermite functions. They form an orthonormal basis of $L^{2}(\R)$ and consequently every $f\in L^{2}(\R)$ can be written as 
\begin{equation}
\label{eq:Hexpansion}
f = \sum_{n \in \N} H(f, n) h_{n}, \qquad \mbox{with } \  H(f, n) = \ev{f}{h_{n}}.
\end{equation}

	\begin{theorem}		\label{t:TFCharH01/2}
	 A function $f\in L^{2}(\mathbb{R})$ satisfies \eqref{eq:H01/2TFDecay} if and only if
			\begin{equation}
				\label{eq:HermiteCoeffH01/2}
				 |H(f, n)| \lesssim e^{-rn} , \qquad \forall r > 0 . 
			\end{equation}
	\end{theorem}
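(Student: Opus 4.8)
The plan is to transport the problem to the Bargmann--Fock side. Recall that the Bargmann transform $\mathcal{B}$ is a unitary map of $L^{2}(\R)$ onto the Fock space of entire functions and that it sends $h_{n}$ to the normalized monomial $z^{n}/\sqrt{n!}$. Consequently, for $f\in L^{2}(\R)$ the expansion \eqref{eq:Hexpansion} becomes the Taylor series
\[
\mathcal{B}f(z)=\sum_{n\in\N}\frac{H(f,n)}{\sqrt{n!}}\,z^{n},
\]
and $\mathcal{B}f$ is automatically an entire function of order at most $2$ and type at most $1/2$. A direct computation with Stirling's formula shows that the Hermite condition \eqref{eq:HermiteCoeffH01/2} is equivalent to $\limsup_{n}n^{1/2}|H(f,n)/\sqrt{n!}|^{1/n}=0$, that is, to $\mathcal{B}f$ being of order $2$ and \emph{minimal type}: $|\mathcal{B}f(z)|\lesssim_{\varepsilon}e^{\varepsilon|z|^{2}}$ for every $\varepsilon>0$. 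Thus Theorem \ref{t:TFCharH01/2} reduces to showing that \eqref{eq:H01/2TFDecay} holds if and only if $\mathcal{B}f$ is of minimal type of order $2$.

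For the easy direction I would start from minimal type. Writing $f=\mathcal{B}^{-1}(\mathcal{B}f)$ through the Bargmann inversion integral, one expresses $f(x)$ (and, using $\mathcal{B}\widehat{f}(z)=\mathcal{B}f(-iz)$, also $\widehat{f}(\xi)$) as a Gaussian integral of $\mathcal{B}f$ over $\C$. Inserting the bound $|\mathcal{B}f(z)|\lesssim_{\varepsilon}e^{\varepsilon|z|^{2}}$ and evaluating the resulting Gaussian integral by completing the square yields $|f(x)|\lesssim e^{-(\frac12-\lambda)x^{2}}$ with $\lambda\to0$ as $\varepsilon\to0$, and likewise for $\widehat{f}$. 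This direction is a routine estimate and needs no complex-analytic input.

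The substance is the forward direction. First I would convert the time--frequency decay into \emph{directional} growth bounds for $\mathcal{B}f$. Completing the square in the Bargmann integral gives, for $z=re^{i\theta}$,
\[
\mathcal{B}f(z)=\pi^{-1/4}e^{z^{2}/2}\int_{\R}f(x)\,e^{-\frac12(x-\sqrt2 z)^{2}}\,dx,
\]
so inserting $|f(x)|\lesssim e^{-(\frac12-\lambda)x^{2}}$ and computing the Gaussian integral produces, after letting $\lambda\to0$, the estimate $|\mathcal{B}f(re^{i\theta})|\lesssim_{\varepsilon}e^{(\frac12\sin^{2}\theta+\varepsilon)r^{2}}$; the twin bound $|\mathcal{B}f(re^{i\theta})|\lesssim_{\varepsilon}e^{(\frac12\cos^{2}\theta+\varepsilon)r^{2}}$ follows from the decay of $\widehat{f}$ via $\mathcal{B}f(-iz)=\mathcal{B}\widehat{f}(z)$. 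In terms of the Phragm\'{e}n--Lindel\"{o}f indicator $h(\theta)=\limsup_{r\to\infty}r^{-2}\log|\mathcal{B}f(re^{i\theta})|$ this reads
\[
h(\theta)\le\tfrac12\min\{\sin^{2}\theta,\cos^{2}\theta\},
\]
which vanishes at $\theta=0,\pi/2,\pi,3\pi/2$ and is only \emph{quadratically} small near those points.

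Finally I would run Phragm\'{e}n--Lindel\"{o}f on the quadrant $0\le\theta\le\pi/2$. Since $\mathcal{B}f$ has finite type of order $2$, its indicator is $2$-trigonometrically convex, i.e.
\[
h(\theta)\,\sin2(\theta_{2}-\theta_{1})\le h(\theta_{1})\sin2(\theta_{2}-\theta)+h(\theta_{2})\sin2(\theta-\theta_{1})
\]
whenever $\theta_{1}<\theta<\theta_{2}$ with $\theta_{2}-\theta_{1}<\pi/2$. The quadrant has opening exactly $\pi/2$, the critical aperture for order $2$, so the endpoint values $h(0),h(\pi/2)\le0$ alone are not enough; this is precisely the degenerate case in which $\sin2\theta$ is admissible and the ordinary principle fails. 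The remedy is to apply the inequality with $\theta_{1}=0$ and $\theta_{2}=\pi/2-\delta$ and then let $\delta\to0$: here $\sin2(\theta_{2}-\theta_{1})=\sin2\delta\sim2\delta$ is only linearly small, whereas the directional bound forces $h(\pi/2-\delta)\le\frac12\sin^{2}\delta\sim\frac12\delta^{2}$ to be quadratically small, whence
\[
h(\theta)\le\frac{h(\pi/2-\delta)\,\sin2\theta}{\sin2\delta}\lesssim\frac{\delta^{2}}{\delta}\,\sin2\theta\xrightarrow[\delta\to0]{}0 .
\]
Thus $h\le0$ on $[0,\pi/2]$, and repeating in each quadrant gives $h\equiv0$, i.e. minimal type. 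The main obstacle is exactly this borderline step: at the critical aperture the classical principle is false (the function $e^{-iz^{2}/4}$, which lies in the Fock space and is bounded on both axes, is the obstruction), and the argument hinges on the \emph{achieved} optimal form in which the quadratic flatness of the boundary bound defeats the linear degeneration of the trigonometric interpolation.
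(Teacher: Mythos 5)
Your reduction to the Bargmann side (coefficient bounds $\Leftrightarrow$ minimal type of order $2$, which is the paper's Proposition \ref{p:PilipovicBargmanChar} with $\omega(t)=t^{2}$) and your directional bounds are fine, but your ``easy'' direction contains a genuine gap: it is not the routine estimate you claim. Insert the modulus bound $|\B f(z)|\lesssim_{\varepsilon}e^{\varepsilon|z|^{2}}$ into the inversion integral $f(t)=\pi^{-1}\int_{\C}\B f(z)\,\overline{A(z,t)}\,e^{-|z|^{2}}\,dA(z)$, where $A(z,t)=\pi^{-1/4}e^{-\frac{1}{2}(z^{2}+t^{2})+\sqrt{2}\,tz}$ and $dA$ is area measure, and complete the square: with $z=u+iv$ the exponent is $-\tfrac{1}{2}t^{2}+\sqrt{2}\,tu-(\tfrac{3}{2}-\varepsilon)u^{2}-(\tfrac{1}{2}-\varepsilon)v^{2}$, whose supremum over $u$ gives $-\tfrac{1-2\varepsilon}{2(3-2\varepsilon)}t^{2}\to-\tfrac{1}{6}t^{2}$ as $\varepsilon\to0$. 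So absolute-value estimation of the inversion integral proves at best $|f(t)|\lesssim e^{-t^{2}/6}$, never $e^{-(\frac{1}{2}-\lambda)t^{2}}$ for all $\lambda>0$; already for $\B f\equiv1$, i.e.\ $f=h_{0}=\pi^{-1/4}e^{-t^{2}/2}$, this bound certifies only $e^{-t^{2}/6}$. The sharp constant is lost the moment the oscillation of the kernel is discarded. This is precisely why the paper's Lemma \ref{l:BargmannToTFDecay} argues differently: for each fixed window center $x$ it uses the one-variable inversion identity $f(t)\phi(t-x)=c\int_{-\infty}^{\infty}V_{\phi}f(x,\xi)e^{it\xi}\,d\xi$, estimates $\int|V_{\phi}f(x,\xi)|\,d\xi\lesssim e^{-\frac{1}{4}x^{2}+\cdots}$ via \eqref{eq:STFTBargmann}, and only then evaluates at the off-center point $t=x/2$, so that dividing by $\phi(-x/2)=\pi^{-1/4}e^{-x^{2}/8}$ turns the window factor into a gain and yields the full rate $e^{-\frac{1}{2}(x/2)^{2}+\cdots}$. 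Some such device (or, alternatively, sharp pointwise tail estimates for Hermite functions) is indispensable for this implication.

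The hard direction, by contrast, is correct, and your implementation is genuinely different from the paper's. Both proofs start from the same directional information (your $h(\theta)\le\frac{1}{2}\min\{\sin^{2}\theta,\cos^{2}\theta\}$ is Lemma \ref{l:TFDecayToBargmann} with $\omega(t)=t^{2}$ written in polar form) and both must get past the critical aperture $\pi/2$ for order $2$. The paper multiplies by $e^{-\varepsilon(ze^{-i\pi/4})^{2}}$ and applies its weighted Phragm\'{e}n--Lindel\"{o}f theorem (Theorem \ref{t:PhragmenLindelofChar} with $\omega(t)=t^{2}$) on the sub-critical sector $[0,\arctan(1/(4\varepsilon))]$, handling the leftover sliver by the directional bound alone; you instead invoke the classical two-constants/trigonometric-convexity principle on $[0,\pi/2-\delta]$ and let $\delta\to0$. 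Your inequality $h(\theta)\le\frac{\sin^{2}\delta}{2\sin2\delta}\sin2\theta$ is right, and the mechanism---quadratic flatness $O(\delta^{2})$ of the boundary bound defeating the linear degeneration $\sin2\delta\sim2\delta$ of the interpolation---is exactly the one the paper exploits. Two points to make explicit in a write-up: trigonometric convexity applies because $\B f$ has finite type for order $2$ (indeed $|\B f(z)|\le\|f\|_{L^{2}}e^{|z|^{2}/2}$); and to return to \eqref{eq:HermiteCoeffH01/2} via Proposition \ref{p:PilipovicBargmanChar} you need $|\B f(z)|\lesssim_{\varepsilon}e^{\varepsilon|z|^{2}}$ uniformly in $\arg z$, so use the sectorial Phragm\'{e}n--Lindel\"{o}f statement with uniform constants (the directional bounds are uniform in $\theta$, so the slivers near the axes cause no trouble) rather than a purely pointwise-in-$\theta$ indicator statement. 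The trade-off: your route needs only classical entire-function theory for Theorem \ref{t:TFCharH01/2}, whereas the paper's Theorem \ref{t:PhragmenLindelofChar} is heavier machinery that it needs anyway for the general weighted result, Theorem \ref{t:FourierCharProperPilipovic}.
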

	
We observe that a function $f$ satisfies \eqref{eq:HermiteCoeffH01/2} 
if and only if $f$
belongs to the Pilipovi{\'c} space $\mathcal H _{0,1/2}(\mathbb R)$, see
\cite{T-ImagFuncDistSpBargmannTrans}. Hence, the previous
result is the same as the following.

	\begin{theorem}		\label{t:TFCharH01/2B}
	 Suppose $f\in L^{2}(\mathbb{R})$. Then $f\in \mathcal H _{0,1/2}(\mathbb R)$
	 if and only if \eqref{eq:H01/2TFDecay} holds true.
	\end{theorem}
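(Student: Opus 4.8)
The plan is to transport both conditions to the Bargmann side and reduce the statement to a sharp Phragmén--Lindel\"of estimate at the critical order $2$. Recall that the Bargmann transform $B$ sends $h_{n}$ to the monomial $z^{n}/\sqrt{n!}$, so that
\[ Bf(z) = \sum_{n\in\N} H(f,n)\,\frac{z^{n}}{\sqrt{n!}} , \]
and that $B$ is a unitary isomorphism of $L^{2}(\R)$ onto the Fock space $\F^{2}$ of entire functions with $\int_{\C}|F(z)|^{2}e^{-|z|^{2}}\,dA(z) < \infty$. The first step is to record the dictionary between \eqref{eq:HermiteCoeffH01/2} and minimal type of order $2$:
\[ |H(f,n)| \lesssim e^{-rn}\ \ \forall r>0 \quad\Longleftrightarrow\quad |Bf(z)| \lesssim_{\epsilon} e^{\epsilon|z|^{2}}\ \ \forall \epsilon>0 . \]
This is a routine Cauchy-estimate/Stirling computation: the bound $|H(f,n)|\le C_{r}e^{-rn}$ summed against $\sum_{n} t^{n}/\sqrt{n!}\lesssim (1+t^{2})e^{t^{2}/2}$ with $t=e^{-r}|z|$ yields the growth bound, and conversely Cauchy's inequalities on circles of radius $R\sim\sqrt{n/\epsilon}$ recover the coefficient bound after Stirling's formula. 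By Theorem \ref{t:TFCharH01/2B} it therefore suffices to prove, for $f\in L^{2}(\R)$, that \eqref{eq:H01/2TFDecay} holds if and only if $Bf$ is of minimal type of order $2$.

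For the easy implication (minimal type $\Rightarrow$ \eqref{eq:H01/2TFDecay}) I would argue directly from the series. Writing $|f(x)|\le\big(\sum_{n}|H(f,n)|^{2}e^{rn}\big)^{1/2}\big(\sum_{n}|h_{n}(x)|^{2}e^{-rn}\big)^{1/2}$, the first factor converges for every $r$ by \eqref{eq:HermiteCoeffH01/2}, while Mehler's formula evaluates $\sum_{n}|h_{n}(x)|^{2}e^{-rn}$ as a Gaussian in $x$ whose decay rate tends to $1/2$ as $r\to\infty$. Choosing $r$ large for each fixed $\lambda>0$ gives $|f(x)|\lesssim_{\lambda} e^{-(\frac12-\lambda)x^{2}}$; since $\widehat{f}=\sum_{n}(-i)^{n}H(f,n)h_{n}$ has Hermite coefficients of the same modulus, the identical estimate holds for $\widehat{f}$.

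The substance is the converse. Assume \eqref{eq:H01/2TFDecay}. For each fixed $\lambda>0$ the pair of estimates is of the form \eqref{eq:S1/2def}, so $f\in\S_{1/2}(\R)$; in particular $f\in L^{2}(\R)$ and $Bf\in\F^{2}$, whence the reproducing-kernel inequality furnishes the global a priori bound $|Bf(z)|\lesssim e^{\frac12|z|^{2}}$ (order $2$, type $\le 1/2$). I would then restrict $Bf$ to the coordinate axes: a short computation identifies $Bf(s)e^{-s^{2}/2}$, up to constants, with the convolution of $f$ by the unit Gaussian evaluated at $\sqrt{2}\,s$, and the Gaussian decay of $f$ forces $|Bf(s)|\lesssim_{\epsilon}e^{\epsilon s^{2}}$ on the real axis for every $\epsilon>0$. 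Using the intertwining relation $B\widehat{f}(z)=Bf(-iz)$ together with the identical decay of $\widehat{f}$, the same bound holds on the imaginary axis. Thus $Bf$ is of minimal type along $\R\cup i\R$ while being globally of order $2$ and type at most $1/2$.

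The final and hardest step is to propagate the minimal-type bound from the two axes to all of $\C$, which is precisely a Phragmén--Lindel\"of problem on each of the four quadrants. Here the obstacle is sharp: a quadrant has aperture $\pi/2=\pi/\rho$ with $\rho=2$, exactly the critical opening for order $2$, where the naive principle fails, as the example $e^{-iz^{2}/2}$ shows (it is bounded on both axes yet attains type $1/2$ at $\arg z=\pi/4$). What rescues the argument is that the boundary estimate holds for \emph{every} $\epsilon>0$ (minimal, not merely bounded, type on the axes) together with the membership $Bf\in\F^{2}$, whose $L^{2}$ integrability excludes precisely such extremal directional growth. I would therefore invoke the achieved optimal form of the Phragmén--Lindel\"of principle on sectors developed in this paper to conclude $|Bf(z)|\lesssim_{\epsilon}e^{\epsilon|z|^{2}}$ throughout $\C$; by the first-step dictionary this is \eqref{eq:HermiteCoeffH01/2}. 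I expect this critical-exponent Phragmén--Lindel\"of estimate to be the crux, the axis computations and the coefficient/growth correspondence being essentially mechanical.
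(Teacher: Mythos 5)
Your first two steps are sound: the coefficient/growth dictionary is exactly Proposition \ref{p:PilipovicBargmanChar} with $\omega(t)=t^{2}$, and your Mehler-formula argument for the direction ``rapid coefficient decay $\Rightarrow$ \eqref{eq:H01/2TFDecay}'' is a correct and pleasantly elementary alternative to the paper's route (which goes through Lemma \ref{l:BargmannToTFDecay} and the short-time Fourier transform). The axis computations in the converse are also correct: \eqref{eq:H01/2TFDecay} does give $|\B f(s)|\lesssim_{\epsilon}e^{\epsilon s^{2}}$ on $\R$ and, via $\B\widehat{f}(z)=\B f(-iz)$, on $i\R$, and the reproducing-kernel bound $|\B f(z)|\lesssim e^{\frac{1}{2}|z|^{2}}$ holds. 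You also correctly identify that the crux is a Phragm\'en--Lindel\"of problem at the critical aperture $\pi/2$ for order $2$.

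The final step, however, is a genuine gap, and not merely a missing citation. First, the tool you invoke does not apply: Theorem \ref{t:PhragmenLindelofChar} requires $\omega$ to satisfy $(\beta_{\sigma}^{\ast})$, and $\omega(t)=t^{2}$ fails $(\beta_{2}^{\ast})$ (the integral $\int_{1}^{\infty}(ts)^{2}s^{-3}\,ds$ diverges); indeed, by the equivalence (I)$\Leftrightarrow$(II) in that theorem, the weighted Phragm\'en--Lindel\"of statement you want is \emph{false} in this critical configuration. Second, and more fundamentally, the data you have retained --- minimal type on the two axes plus $\B f\in\F^{2}$ (equivalently, type $\leq 1/2$) --- is provably insufficient to conclude minimal type on $\C$: take $F(z)=e^{i\beta z^{2}}$ with $0<\beta<1/2$. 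Then $|F(z)|=e^{-2\beta xy}$ equals $1$ on both coordinate axes, $F\in\F^{2}$ since the quadratic form $x^{2}+4\beta xy+y^{2}$ is positive definite, so $F=\B f$ for some $f\in L^{2}(\R)$ (a chirped Gaussian), yet $F$ has type $\beta>0$ along a diagonal. So no Phragm\'en--Lindel\"of argument, however refined, can close the gap from your hypotheses; your claim that $L^{2}$-integrability ``excludes precisely such extremal directional growth'' only rules out type exactly $1/2$, not type $\beta<1/2$. The paper escapes this trap by extracting strictly stronger, genuinely two-dimensional information from \eqref{eq:H01/2TFDecay}: Lemma \ref{l:TFDecayToBargmann} yields the cross bounds $|\B f(z)|\lesssim e^{\frac{1}{2}x^{2}+\lambda y^{2}}$ and $|\B f(z)|\lesssim e^{\frac{1}{2}y^{2}+\lambda x^{2}}$ for \emph{every} $\lambda>0$ and \emph{all} $z=x+iy\in\C$ (not just on the axes) --- these do exclude the chirps above. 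It then multiplies by the Gaussian twist $e^{-\varepsilon(ze^{-i\pi/4})^{2}}$, applies Theorem \ref{t:PhragmenLindelofChar} only on subcritical sectors $0\leq\arg z\leq\theta$ with $\theta=\arctan(1/(4\varepsilon))<\pi/2$ (where one may take $\sigma=\pi/\theta>2$, and $t^{2}$ does satisfy $(\beta_{\sigma}^{\ast})$), and handles the thin remaining sector near the imaginary axis by a direct estimate using $\frac{1}{2}x^{2}\leq\frac{1}{2}\cos^{2}\theta\,|z|^{2}$. To repair your proof you would need to import those global cross bounds and this sector-splitting mechanism; the axis restrictions alone cannot do the job.
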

Our second main result refines Theorem \ref{t:TFCharH01/2} and provides	
Hermite expansion characterizations for functions satisfying  
time-frequency decay estimates governed through suitable weight functions.
Here, by a \emph{weight function} we mean an unbounded non-decreasing function $\omega : [0, \infty) \to [0, \infty)$. 
See also Section \ref{sec:WeightFunc} for some explanation on the notions occurring in 
the next
statement.

	\begin{theorem}
		\label{t:FourierCharProperPilipovic}
		Let $\omega$ be a weight function such that $\varphi(t)=\omega(e^{t})$ is convex, and such that
	\begin{equation}
	\label{eq:keyHcharcond}
	 \int_{x}^{\infty}\frac{\omega(t)}{t^3}\, dt =O\left( \frac{\omega(x)}{x^2} \right), \qquad x\to\infty .
	\end{equation}
	Then, a function $f \in L^{2}(\R)$ satisfies
	\begin{equation}
				\label{eq:FourierCharProperPilipovic}
				|f(x)| \lesssim e^{-\frac{1}{2} x^{2} + \lambda \omega(|x|)}  \quad \mbox{and} \quad |\widehat{f}(\xi)| \lesssim e^{-\frac{1}{2} \xi^{2} + \lambda \omega(|\xi|)} 
		\end{equation}
		 for some $\lambda > 0$ (for every $\lambda > 0$)
		if and only if  its Hermite coefficients satisfy
			\begin{equation}
				\label{eq:HermiteCharProperPilipovic}
				|H(f, n)| \lesssim \sqrt{n!} \cdot e^{-\frac{1}{r} \varphi^{*}(r n)}
			\end{equation}
			for some $r > 0$ (for every $r > 0$), where $\varphi^{*}$ is the Young conjugate of $\varphi$.
	\end{theorem}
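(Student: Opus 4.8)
The plan is to transport the whole equivalence to the Bargmann side, where the Hermite basis becomes the monomial basis and time-frequency decay becomes a growth condition on an entire function. Write $F=\mathfrak{B}f$ for the Bargmann transform, so that $F(z)=\sum_{n}\frac{H(f,n)}{\sqrt{n!}}z^{n}$ and \eqref{eq:HermiteCharProperPilipovic} is equivalent to $|a_{n}|\lesssim e^{-\frac{1}{r}\varphi^{*}(rn)}$ for the Taylor coefficients $a_{n}=H(f,n)/\sqrt{n!}$. First I would record the elementary duality: by Cauchy's estimates $|a_{n}|\le M(R)/R^{n}$ with $M(R)=\max_{|z|=R}|F(z)|$, and optimizing in $R=e^{s}$ together with $\varphi^{**}=\varphi$ (here the convexity of $\varphi(t)=\omega(e^{t})$ enters) shows that \eqref{eq:HermiteCharProperPilipovic} holds for some (resp. every) $r>0$ if and only if
\[
|F(z)|\lesssim e^{\lambda\omega(|z|)}
\]
for some (resp. every) $\lambda>0$, with $\lambda=1/r$. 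Thus the theorem reduces to matching the time-frequency estimate \eqref{eq:FourierCharProperPilipovic} with this global growth bound for $F$.

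Next I would extract the two one-sided estimates that the decay of $f$ and of $\widehat f$ produce on the Bargmann side. Inserting $|f(t)|\lesssim e^{-\frac12 t^{2}+\lambda\omega(|t|)}$ into $F(z)=\pi^{-1/4}\int e^{\sqrt2 zt-\frac12 t^{2}-\frac12 z^{2}}f(t)\,dt$ and completing the square gives, for some constant $c\in(0,1)$,
\[
|F(z)|\lesssim e^{\frac12(\operatorname{Im}z)^{2}+\lambda\omega(c|\re z|)},
\]
and, using $\mathfrak{B}\widehat f(z)=F(\mp iz)$ together with the analogous bound for $\widehat f$,
\[
|F(z)|\lesssim e^{\frac12(\re z)^{2}+\lambda\omega(c|\operatorname{Im}z|)}.
\]
The converse implication is the easy one: from the global bound $|F(z)|\lesssim e^{\lambda\omega(|z|)}$ I would recover \eqref{eq:FourierCharProperPilipovic} by a direct estimate of the inverse Bargmann transform (and its composition with the $\pm\pi/2$ rotation to treat $\widehat f$), using $\omega(c|z|)\le\omega(|z|)$ and absorbing constants into $\lambda$. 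So the heart of the matter is the forward implication: the two paraboloidal estimates above must be upgraded to $|F(z)|\lesssim e^{\lambda\omega(|z|)}$.

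This upgrade is a Phragm\'en--Lindel\"of problem on the four quadrants, and it is genuinely delicate because $F$ has order exactly $2$, which is the \emph{critical} order for a sector of opening $\pi/2$. Off-the-shelf (subcritical) Phragm\'en--Lindel\"of does not apply, and mere boundedness on the coordinate axes is not enough: the functions $e^{\mp iaz^{2}}$ are bounded on both axes yet grow like $e^{2a|\re z|\,|\operatorname{Im}z|}$ in the open quadrants, so the boundary data alone cannot determine the interior. The point I would exploit is that these extremal order-$2$ growths are excluded by the \emph{interior} paraboloidal bounds themselves: testing the inequality $2a\,\re z\,\operatorname{Im}z\le\tfrac12(\operatorname{Im}z)^{2}+\lambda\omega(c|\re z|)+O(1)$ at $\operatorname{Im}z=2a\,\re z$ forces $2a^{2}(\re z)^{2}\le\lambda\omega(c|\re z|)+O(1)=o((\re z)^{2})$, hence $a=0$. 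In other words, the two one-sided estimates pin the critical type to zero, and what remains is to propagate the subcritical boundary growth $\lambda\omega(c|z|)$ into the interior.

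For this last propagation I would conformally map each quadrant to a half-plane by $w=z^{2}$, turning the order-$2$ growth into exponential type and the boundary datum into $\psi(t)=\lambda\omega(c\sqrt{|t|})$, and compare $\log|F|$ with the Poisson (harmonic) majorant of $\psi$. The hypothesis \eqref{eq:keyHcharcond} is exactly what is needed here: after the substitution $t=s^{2}$ it becomes $\int_{x}^{\infty}\psi(t)\,t^{-2}\,dt=O(\psi(x^{2})/x^{2})$, which both guarantees convergence of the Poisson integral and forces the resulting majorant to grow no faster than $\omega(c'|z|)$ back in the $z$-variable. This is the \emph{achieved optimal form} of the principle: the interior bound matches the boundary bound up to a constant. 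Combining the vanishing of the type with this optimal majorant yields $\log|F(z)|\lesssim\omega(|z|)$ throughout, completing the forward direction. I expect this critical-order step --- excluding the extremal $e^{\mp iaz^{2}}$ growth and verifying that \eqref{eq:keyHcharcond} makes the harmonic majorant sharp --- to be the main obstacle; the Bargmann reductions and the converse implication are comparatively routine.
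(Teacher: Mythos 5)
Your overall architecture --- passing to the Bargmann side via Cauchy estimates and Young-conjugate duality, extracting the two paraboloidal bounds from the decay of $f$ and $\widehat{f}$, and recognizing that everything hinges on a critical-order Phragm\'en--Lindel\"of step on the quadrants in which \eqref{eq:keyHcharcond} enters through a harmonic (Poisson) majorant --- coincides with the paper's (Proposition \ref{p:PilipovicBargmanChar}, Lemmas \ref{l:TFDecayToBargmann} and \ref{l:BargmannToTFDecay}, Lemma \ref{l:SuitableAnalyticFunc}, Theorem \ref{t:PhragmenLindelofChar}). The gap is in the one step you yourself flag as the main obstacle: the claim that ``the two one-sided estimates pin the critical type to zero.'' Your argument tests the hypotheses only against the extremal functions $e^{\mp iaz^{2}}$ (along the curve $y=2ax$) and rules those out; this does not show that an \emph{arbitrary} $F$ satisfying the two paraboloidal bounds has zero type. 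Indeed, writing $z=x+iy$, on the diagonal $\arg z = \pi/4$ both bounds degenerate to $|F(\rho e^{i\pi/4})|\lesssim e^{\rho^{2}/4+o(\rho^{2})}$, so direct evaluation of the hypotheses pins the mean type (after the map $w=z^{2}$, measured along the imaginary $w$-axis) only to $\leq 1/4$, not to $0$. Consequently the Poisson-majorant comparison in the half-plane cannot be invoked: at critical order, boundary data plus finite type do not control the interior --- this is precisely the $e^{-iaw}$ obstruction you mention, and it reappears untouched after the conformal map $w=z^{2}$. So as written, the key step is asserted rather than proved.

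The paper closes exactly this hole, and it is the real content of Theorem \ref{t:TFCharH01/2} (which is then cited inside the proof of Theorem \ref{t:FourierCharProperPilipovic}). One multiplies by the compensating factor $G_{\varepsilon}(z)=\B f(z)\, e^{-\varepsilon (z e^{-i\pi/4})^{2}}$, whose modulus is $e^{-2\varepsilon xy}$, and splits the quadrant along the $\varepsilon$-dependent ray $\theta=\arctan(1/(4\varepsilon))$. On the narrow sector between the real axis and this ray, the factor exactly cancels the term $\frac{1}{2}x^{2}$ from the paraboloidal bound on the ray, so the \emph{subcritical} weighted Phragm\'en--Lindel\"of theorem applies (Theorem \ref{t:PhragmenLindelofChar} with weight $t^{2}$, which satisfies $(\beta^{*}_{\sigma})$ for $\sigma=\pi/\theta>2$); on the remaining sector $[\theta,\pi/2]$ the paraboloidal bound alone already gives $e^{9\varepsilon |z|^{2}}$ because $\cos^{2}\theta\leq 16\varepsilon^{2}$. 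Letting $\varepsilon\downarrow 0$ yields $|\B f(z)|\lesssim_{\varepsilon} e^{\varepsilon|z|^{2}}$ everywhere, and only with this subcritical growth in hand can Theorem \ref{t:PhragmenLindelofChar} be applied with $\sigma=2$ and the weight $\omega$ (here \eqref{eq:keyHcharcond}, i.e.\ $(\beta_{2}^{*})$, plays the role you correctly assign to it) to reach $|\B f(z)|\lesssim e^{\lambda\omega(|z|)}$. Note also that this preliminary step uses \eqref{eq:FourierCharProperPilipovic} only through the weaker estimates \eqref{eq:H01/2TFDecay}, valid for all $\lambda>0$ since $\omega(t)=o(t^{2})$ by Lemma \ref{l:betasigma=>o(t^sigma)}, so it covers the ``some $\lambda$'' case as well. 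Your proposal would become a proof if the type-pinning heuristic were replaced by this (or an equivalent) argument establishing subcritical growth in the open quadrants.
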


We point out that Theorem \ref{t:TFCharH01/2} itself will play an important role in showing Theorem \ref{t:FourierCharProperPilipovic}.

Evidently, if $\H _\omega (\R)$ ($\H _{0,\omega} (\R)$)
is the set of all $f\in L^2(\R)$ such that \eqref{eq:HermiteCharProperPilipovic}
holds true for some $r>0$ (for every $r>0$), then Theorem
\ref{t:FourierCharProperPilipovic} can be reformulated as follows.

	\begin{theorem}
		\label{t:FourierCharProperPilipovic2}
		Suppose $f \in L^{2}(\R)$, and let $\omega$ be a weight function such that $\varphi(t) = \omega(e^{t})$ is convex and that
		\eqref{eq:keyHcharcond} holds.
		Then $f\in \H _\omega (\R)$
		($f\in \H _{0,\omega} (\R)$) if and only if
		\eqref{eq:FourierCharProperPilipovic} holds true
		 for some $\lambda > 0$ (for every $\lambda > 0$).
		\end{theorem}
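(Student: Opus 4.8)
The asserted statement is a verbatim reformulation of Theorem \ref{t:FourierCharProperPilipovic}: by the very definition of $\H_\omega(\R)$ and $\H_{0,\omega}(\R)$, membership of $f$ in these spaces means exactly that \eqref{eq:HermiteCharProperPilipovic} holds for some, respectively for every, $r>0$, and Theorem \ref{t:FourierCharProperPilipovic} identifies these two conditions with the time-frequency bounds \eqref{eq:FourierCharProperPilipovic} for some, respectively for every, $\lambda>0$. So the only thing to do is to prove Theorem \ref{t:FourierCharProperPilipovic}, and the plan is to transport the whole problem to the Fock side via the Bargmann transform $B$. Since $B$ is a unitary isomorphism of $L^2(\R)$ onto the Bargmann--Fock space sending each $h_n$ to the normalized monomial $z^n/\sqrt{n!}$, one has $Bf(z)=\sum_{n\in\N} H(f,n)\, z^n/\sqrt{n!}$, so the Taylor coefficients of the entire function $F:=Bf$ are $c_n = H(f,n)/\sqrt{n!}$. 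Hence \eqref{eq:HermiteCharProperPilipovic} is exactly the coefficient decay $|c_n|\lesssim e^{-\frac{1}{r}\varphi^{*}(rn)}$, and everything becomes a question about the growth of $F$ in $\C$.

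For the direction from the Hermite side to the time-frequency side I would argue by direct summation. Inserting $|c_n|\lesssim e^{-\frac{1}{r}\varphi^{*}(rn)}$ into $|F(z)|\le \sum_n |c_n|\,|z|^n$ and writing $s=\log|z|$, the exponent $-\frac{1}{r}\varphi^{*}(rn)+n\log|z|$ is governed by the Legendre--Fenchel duality $\sup_{u}(us-\varphi^{*}(u))=\varphi^{**}(s)=\varphi(s)$, which holds because $\varphi$ is convex; since $\varphi(\log|z|)=\omega(|z|)$, this produces a global growth bound for $F$ driven by $\omega$. Feeding this into the standard dictionary between $B$ and the short-time Fourier transform with Gaussian window, and restricting $z$ to the real and to the imaginary axis, returns precisely the decay of $f$ and of $\widehat f$ in \eqref{eq:FourierCharProperPilipovic} once the Gaussian weight relating $F$ to the transform is taken into account; matching the some-$r$/every-$r$ quantifiers with the some-$\lambda$/every-$\lambda$ quantifiers is then routine bookkeeping.

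The reverse direction is the substantial one. Here the same dictionary shows that the decay of $f$ in \eqref{eq:FourierCharProperPilipovic} controls $|F(z)|$ only along the real axis, while the decay of $\widehat f$ controls it only along the imaginary axis; the task is to propagate these bounds from the two coordinate axes to all of $\C$, after which a Cauchy estimate on circles of the optimal radius recovers $|c_n|\lesssim e^{-\frac{1}{r}\varphi^{*}(rn)}$, i.e.\ \eqref{eq:HermiteCharProperPilipovic}. To pass from the axes to the full plane I would run a Phragm\'{e}n-Lindel\"{o}f argument in each of the four open quadrants. This is exactly where Theorem \ref{t:TFCharH01/2} enters: since $\omega$ grows strictly slower than $x^2$ (a consequence of \eqref{eq:keyHcharcond}), any $f$ satisfying \eqref{eq:FourierCharProperPilipovic} also satisfies the weaker estimate \eqref{eq:H01/2TFDecay}, whence Theorem \ref{t:TFCharH01/2} yields $f\in\H_{0,1/2}(\R)$ and thereby the a priori growth class of $F$ needed to start the maximum-principle machinery.

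The main obstacle is the borderline nature of this Phragm\'{e}n-Lindel\"{o}f step. The function $F$ has growth of order $2$, while each quadrant subtends an opening of $\pi/2$, so the product of order and opening equals $\pi$: this is precisely the critical configuration in which the classical Phragm\'{e}n-Lindel\"{o}f principle yields no conclusion. Overcoming it requires the sharpened, \emph{achieved} forms of the principle on sectors announced in the abstract, in which the Gaussian a priori bound is not merely tolerated but used as the driving growth, and in which the boundary data on the two rays must be combined so that the $\omega$-corrections survive the passage to the interior. I expect this critical-angle analysis, rather than the Young-conjugate computation or the quantifier bookkeeping, to be the heart of the proof.
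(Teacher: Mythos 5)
Your proposal follows essentially the same route as the paper: the statement is indeed just a restatement of Theorem \ref{t:FourierCharProperPilipovic}, and the paper proves it exactly as you outline, via the Bargmann transform dictionary (Proposition \ref{p:PilipovicBargmanChar}, i.e.\ Legendre duality plus Cauchy estimates), the short-time Fourier transform link giving axis bounds (Lemmas \ref{l:TFDecayToBargmann} and \ref{l:BargmannToTFDecay}), the a priori bound $|\B f(z)|\lesssim e^{\varepsilon|z|^{2}}$ for all $\varepsilon>0$ supplied by Theorem \ref{t:TFCharH01/2}, and the weighted Phragm\'{e}n-Lindel\"{o}f principle on quadrants (Theorem \ref{t:PhragmenLindelofChar}, where \eqref{eq:keyHcharcond} enters as $(\beta_{2}^{\ast})$). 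You correctly identified all the key ingredients, including the critical-angle role of the Gaussian a priori bound, so the plan matches the paper's proof in both structure and substance.
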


It turns out that with certain special choices of the weight functions (see Example \ref{ex:ClassicalPilipovicSp}) one can recover the full family of so-called proper Pilipovi\'{c} spaces,  introduced in \cite{T-ImagFuncDistSpBargmannTrans} and further studied in
\cite{AbFeGaToUs,FerGalTof,T-G-FourierCharPilipovicSp}.
Hence, in the one-dimensional case, our Theorem \ref{t:FourierCharProperPilipovic} significantly improves and extends upon  \cite[Theorem 2.2 and Theorem 2.3]{T-G-FourierCharPilipovicSp}, avoiding any need for the use of partial fractional Fourier transforms and, in particular, supplying so a transparent Fourier characterization of the Pilipovi\'{c} spaces. For example, in the terminology from
\cite{T-BargmannTransModGSSp,T-G-FourierCharPilipovicSp}, 
Theorem \ref{t:TFCharH01/2} is a Fourier characterization of the ``largest'' proper Pilipovi\'{c} space.
More generally, all these improvements are summarized by the next corollary; we refer to Section \ref{sec:FourierCharPilipovicSp} and Example \ref{ex:ClassicalPilipovicSp} below for unspecified notation.  
We just mention that for the sake of symbolic consistency, we have extended the definition of the family of Pilipovi\'{c} spaces to include $\mathcal{H}_{0,0}(\mathbb{R})$,  the one-dimensional space generated by the Gaussian $\phi$. 
	
	\begin{corollary}
		\label{t:TFCharClassicalPilipovicSp}
		Let $s \in \overline{\R}_{\flat}$ be such that $0 \leq s < 1/2$ (such that $0 \leq s \leq 1/2$).
		For any $f \in L^{2}(\R)$, then
		$f \in \H_{s}(\R)$ ($f \in \H_{0, s}(\R)$) if and only if
			\[ |f(x)| \lesssim e^{-\frac{1}{2} x^{2} + \lambda \omega_{s}(|x|)}  \quad \mbox{and}\quad|\widehat{f}(\xi)| \lesssim e^{-\frac{1}{2} \xi^{2} + \lambda \omega_{s}(|\xi|)} , \quad \text{for some } \lambda > 0 ~ (\text{for all } \lambda > 0) , \]
		where  the weight function $\omega_{s}$ is given by
			\begin{equation}
				\label{eq:WeightFuncClassicalPilipovicSp}
				\omega_{s}(t) =
					\begin{cases}
						\log (1 + t)^{\frac{1}{1 - 2s}} , & s \in \R , 0\leq s < 1/2 , \\
						t^{\frac{2 \sigma}{\sigma + 1}} , & s = \flat_{\sigma} , \\
						t^{2} , & s = 1/2 .
					\end{cases}
			\end{equation}
	\end{corollary}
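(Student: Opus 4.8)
The plan is to obtain the corollary as a direct specialization of Theorem~\ref{t:FourierCharProperPilipovic2} to the explicit weights $\omega_s$, supplemented by Theorem~\ref{t:TFCharH01/2} at the top endpoint. First I would recall from Section~\ref{sec:FourierCharPilipovicSp} the defining Hermite-coefficient descriptions of the three families occurring in \eqref{eq:WeightFuncClassicalPilipovicSp}: for real $0<s<1/2$ the space $\H_s(\R)$ (resp.\ $\H_{0,s}(\R)$) is characterized by $|H(f,n)|\lesssim \sqrt{n!}\,e^{-rn^{1/(2s)}}$ for some (resp.\ every) $r>0$; for $s=\flat_\sigma$ it is characterized by $|H(f,n)|\lesssim h^{n}(n!)^{-1/(2\sigma)}$ for some (resp.\ every) $h>0$; and the degenerate value $s=0$ corresponds to $f$ having finite Hermite expansion (resp., under the Beurling quantifier, to $f\in\spn\{\phi\}$, which is the stipulated meaning of $\H_{0,0}(\R)$). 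Once these descriptions are in place, the whole corollary is reduced to comparing them with the output \eqref{eq:HermiteCharProperPilipovic} of Theorem~\ref{t:FourierCharProperPilipovic}, i.e.\ to a Young-conjugate computation for each $\omega_s$.

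The endpoint $s=1/2$, which only appears in the Beurling case $\H_{0,1/2}(\R)$, is treated separately. Here $\omega_{1/2}(t)=t^{2}$, so the estimates \eqref{eq:FourierCharProperPilipovic} read $|f(x)|\lesssim e^{-(\frac{1}{2}-\lambda)x^2}$ and $|\widehat f(\xi)|\lesssim e^{-(\frac{1}{2}-\lambda)\xi^2}$, which is exactly \eqref{eq:H01/2TFDecay}; the assertion is then precisely Theorem~\ref{t:TFCharH01/2B}. This case genuinely lies outside the reach of Theorem~\ref{t:FourierCharProperPilipovic}, since $\int_x^\infty t^{2}\,t^{-3}\,dt$ diverges and so \eqref{eq:keyHcharcond} fails; this is the structural reason it must be routed through Theorem~\ref{t:TFCharH01/2} instead.

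For the range $0\le s<1/2$ I would first check the hypotheses of Theorem~\ref{t:FourierCharProperPilipovic}. Convexity of $\varphi(t)=\omega_s(e^{t})$ is immediate: it equals $(\log(1+e^{t}))^{1/(1-2s)}$ in the logarithmic case and $e^{2\sigma t/(\sigma+1)}$ in the $\flat_\sigma$ case. The growth condition \eqref{eq:keyHcharcond} follows in each case from a one-line asymptotic comparison of $\int_x^\infty \omega_s(t)\,t^{-3}\,dt$ with $\omega_s(x)\,x^{-2}$, using that $\omega_s$ grows like $(\log x)^{1/(1-2s)}$ (resp.\ like $x^{b}$ with $b=2\sigma/(\sigma+1)<2$). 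The heart of the matter is the Young conjugate. In the $\flat_\sigma$ case $\varphi(t)=e^{bt}$ gives $\varphi^{*}(y)=\frac{y}{b}\bigl(\log(y/b)-1\bigr)$, and after substitution into \eqref{eq:HermiteCharProperPilipovic} Stirling's formula turns $\sqrt{n!}\,e^{-\frac{1}{r}\varphi^{*}(rn)}$ into $\sqrt{n!}\,(n!)^{-1/b}h^{n}=(n!)^{1/2-1/b}h^{n}=(n!)^{-1/(2\sigma)}h^{n}$, since $1/b=\frac{1}{2}+\frac{1}{2\sigma}$; this is exactly the defining bound for $\H_{\flat_\sigma}(\R)$. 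In the logarithmic case $\varphi(t)\sim t^{1/(1-2s)}$ has conjugate $\varphi^{*}(y)\sim c\,y^{1/(2s)}$ with conjugate exponent $1/(2s)$, so $e^{-\frac{1}{r}\varphi^{*}(rn)}\sim e^{-r'n^{1/(2s)}}$; here the factor $\sqrt{n!}$ persists identically on both sides and matching \eqref{eq:HermiteCharProperPilipovic} with the definition of $\H_s(\R)$ is immediate. Feeding these identifications into Theorem~\ref{t:FourierCharProperPilipovic2} then gives the equivalence with \eqref{eq:FourierCharProperPilipovic}.

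I expect two points to require genuine care. The first is the faithfulness of the quantifiers: the constant $r'$ (resp.\ $h$) produced by the Young conjugate depends on $r$ through a continuous, strictly monotone law that maps $(0,\infty)$ bijectively onto $(0,\infty)$, and one must verify that this correspondence sends ``for some $r$'' to the Roumieu constant and ``for every $r$'' to the Beurling constant in the Pilipovi\'c description, so that $\H_s(\R)$ (resp.\ $\H_{0,s}(\R)$) is recovered exactly. The second, and more delicate, is the degenerate endpoint $s=0$: there $\varphi(t)=\log(1+e^{t})$ has the binary-entropy conjugate $\varphi^{*}(y)=y\log y+(1-y)\log(1-y)$ on $[0,1]$ with $\varphi^{*}(y)=+\infty$ for $y>1$, so \eqref{eq:HermiteCharProperPilipovic} forces $H(f,n)=0$ for $n>1/r$. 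Reading this as a finite Hermite expansion under the Roumieu quantifier, and as $f\in\spn\{\phi\}$ under the Beurling quantifier, is what pins down $\H_0(\R)$ and $\H_{0,0}(\R)$ and closes the argument.
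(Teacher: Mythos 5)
Your proposal is correct and follows essentially the same route as the paper: the paper obtains the corollary by combining Theorems \ref{t:TFCharH01/2} and \ref{t:FourierCharProperPilipovic} with Example \ref{ex:ClassicalPilipovicSp} (verification of $(\delta)$ and \eqref{eq:keyHcharcond} for $\omega_{s}$, the Young-conjugate computations, and the identification $\H_{\omega_{s}}(\R)=\H_{s}(\R)$, $\H_{0,\omega_{s}}(\R)=\H_{0,s}(\R)$), with the endpoint $s=1/2$ handled by Theorem \ref{t:TFCharH01/2} exactly as you do.

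One imprecision is worth flagging. The Hermite-coefficient definition of $\H_{s}(\R)$ and $\H_{0,s}(\R)$ for real $s>0$, as given in \cite{T-ImagFuncDistSpBargmannTrans} and invoked in Example \ref{ex:ClassicalPilipovicSp}, reads $|H(f,n)|\lesssim e^{-rn^{1/(2s)}}$ \emph{without} the factor $\sqrt{n!}$, whereas your ``recalled'' description carries an extra $\sqrt{n!}$. For $0<s<1/2$ the two are indeed equivalent, but only because $\log\sqrt{n!}=O(n\log n)=o\bigl(n^{1/(2s)}\bigr)$, so that $\sqrt{n!}$ can be absorbed by shrinking $r$; this absorption is precisely the content of the asymptotic identity $\sqrt{n!}\,e^{-\frac{1}{r}\varphi_{s}^{*}(rn)}\asymp e^{-rn^{1/(2s)}}$ stated in Example \ref{ex:ClassicalPilipovicSp}, and it is the step that your remark ``the factor $\sqrt{n!}$ persists identically on both sides'' glosses over. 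Note also that this absorption fails at $s=1/2$ (there $\sqrt{n!}\,e^{-rn}$ is unbounded in $n$ for every $r$), which is a second reason, besides the failure of $(\beta_{2}^{\ast})$, why the endpoint must be routed through Theorem \ref{t:TFCharH01/2}. With this one-line repair, and keeping your (correct) treatment of the $\flat_{\sigma}$ case and of the degenerate case $s=0$, the argument is complete.
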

	
Our article is structured as follows. In Section \ref{sec:WeightFunc} we briefly discuss some basic properties of weight functions. 
Section \ref{sec:PhragmenLindelof} provides a version of the Phragm\'{e}n-Lindel\"{o}f principle which allows one to deduce growth properties of analytic functions on sectors with respect to a weight function. This Phragm\'{e}n-Lindel\"{o}f result, when combined with the Bargmann and short-time Fourier transforms, is the key to our argument for proving Theorems \ref{t:TFCharH01/2} and \ref{t:FourierCharProperPilipovic}.  As a matter of fact, under certain assumptions on $\omega$, Theorem \ref{t:PhragmenLindelofChar} provides a complete characterization of the validity of the concrete form of the Phragm\'{e}n-Lindel\"{o}f principle that we need. The proofs of Theorems \ref{t:TFCharH01/2} and \ref{t:FourierCharProperPilipovic} are then given in Section \ref{sec:FourierCharPilipovicSp}.
Finally, in Section \ref{sec:MultDim}, we provide multidimensional analogs of our main results.

We would like to close this introduction by mentioning that there is an extensive literature on the interplay between Hermite expansions and function space theory, see e.g. \cite{BDJ2003, Langenbruch2006,Pilipovic88,Tan,Zhang63}. (See also \cite{VV2016} for eigenexpansions with respect to elliptic Shubin operators.)

\section{Preliminaries: weight functions}
\label{sec:WeightFunc}

Let $\omega$ be a weight function and let $\sigma > 0$. We will often impose one or more of the following conditions
 on $\omega$ at $\infty$ (we refer to \cite{B-M-T-UltradiffFuncFourierAnal,M-T-WhitneyExtensionThmUltradiffBeurling} for their analysis):

	\begin{itemize}
		\item[$(\alpha)$] $\omega(t + s) \leq L [ \omega(t) + \omega(s) + 1] ,$ for some $L\geq 1$ and $\forall t, s \geq 0$ ;
		\item[$(\beta_{\sigma})$] $ \int_{1}^{\infty} \omega(s)s^{-1 - \sigma}\, ds < \infty$; 
		\item[$(\beta^{\ast}_{\sigma})$] $\int_{1}^{\infty} \omega(ts) s^{-1 - \sigma}\, ds = O(\omega(t))$; 
		\item[$(\gamma)$] $\log t = o(\omega(t))$;
		\item[$(\delta)$] the function $\varphi(t) = \omega(e^{t})$ is convex.
	\end{itemize}	
Note that $(\beta_{2}^{\ast})$ and $(\delta)$ are precisely our hypotheses on $\omega$ in Theorem \ref{t:FourierCharProperPilipovic}. Throughout the article, $L$ always stands for the constant occurring in $(\alpha)$.
	
Two weight functions $\omega$ and $\kappa$ are called \emph{equivalent} if $\omega(t)=O(\kappa(t))$ and $\kappa(t)=O(\omega(t))$; as customary, one writes in such a case $\omega \asymp \kappa$. Observe that each of the conditions above, except for $(\delta)$, is invariant under equivalence of weight functions. We will make use of the following lemma, whose proof is given within the one of  \cite[Lemma 1.7]{B-M-T-UltradiffFuncFourierAnal}.
	
	\begin{lemma}
		\label{l:SmoothWeightFunc}
		For any weight function $\omega$ there exists a smooth weight function $\kappa$ such that $\omega \asymp \kappa$. Moreover, if $\omega$ satisfies $(\delta)$, $\kappa$ can be chosen to satisfy it as well.
	\end{lemma}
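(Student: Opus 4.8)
The plan is to produce $\kappa$ by a standard additive mollification of $\omega$ and then to check that this operation preserves each defining feature of a weight function, together with condition $(\delta)$ when it is assumed. The smoothing and the elementary weight-function properties are routine; the preservation of $(\delta)$ is the genuinely interesting point, while the equivalence $\omega \asymp \kappa$ is where the growth of $\omega$ must be kept under control.

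First I would fix once and for all a function $\psi \in C^{\infty}_{c}(\R)$ with $\psi \geq 0$, $\supp \psi \subset (0,1)$ and $\int_{\R} \psi = 1$, and set
\[ \kappa(x) = \int_{\R} \omega(x+s)\, \psi(s)\, ds = \int_{\R} \omega(u)\, \psi(u-x)\, du , \]
where $\omega$ is extended by the constant $\omega(0)$ on $(-\infty,0)$ (all arguments $x+s$ occurring below are positive, so the extension is immaterial). The second expression exhibits $\kappa$ as a convolution in which every $x$-derivative falls on the smooth, compactly supported factor $\psi(\,\cdot-x)$; since $\omega$ is non-decreasing and hence locally bounded, differentiation under the integral is justified and gives $\kappa^{(k)}(x) = (-1)^{k}\int \omega(u)\, \psi^{(k)}(u-x)\, du$, so $\kappa \in C^{\infty}$. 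Monotonicity of $\omega$ transfers to $\kappa$, and since $\omega(x) \leq \omega(x+s) \leq \omega(x+1)$ for $s \in (0,1)$ while $\int \psi = 1$, I obtain the sandwich
\[ \omega(x) \leq \kappa(x) \leq \omega(x+1) . \]
In particular $\kappa$ is unbounded, hence a weight function, and $\omega = O(\kappa)$ is immediate.

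The reverse estimate $\kappa = O(\omega)$ reduces to $\omega(x+1) = O(\omega(x))$, and this is the main obstacle: it is precisely the step that couples the construction to the increments of $\omega$. Under the quasi-subadditivity $(\alpha)$ it is automatic, since then $\omega(x+1) \leq L[\omega(x)+\omega(1)+1] \leq (L+1)\,\omega(x)$ for all large $x$, whence $\kappa \asymp \omega$. (If $\omega$ has very rapid additive growth, one would instead mollify with a kernel of shrinking width, writing $\kappa(x) = \int \omega(u)\, \eta(x)^{-1}\psi((u-x)/\eta(x))\, du$ with $\eta(x) \downarrow 0$; this keeps $\kappa$ smooth and replaces the sandwich by $\omega(x) \leq \kappa(x) \leq \omega(x+\eta(x))$, restoring the equivalence at the price of complicating the verification of $(\delta)$, because the smoothing weights then depend on $x$.)

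It remains to settle the ``moreover'': if $\varphi(t) = \omega(e^{t})$ is convex, then so is $t \mapsto \kappa(e^{t})$. Note first that $\varphi$ is non-decreasing, being the composition of the non-decreasing $\omega$ with $\exp$. From the additive definition of $\kappa$,
\[ \kappa(e^{t}) = \int_{\R} \omega(e^{t}+s)\, \psi(s)\, ds = \int_{\R} \varphi\big( \log(e^{t}+s) \big)\, \psi(s)\, ds . \]
For each fixed $s \geq 0$ put $\ell_{s}(t) = \log(e^{t}+s)$; a direct computation gives $\ell_{s}''(t) = s\, e^{t}\, (e^{t}+s)^{-2} \geq 0$, so $\ell_{s}$ is convex (and non-decreasing). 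Since $\varphi$ is convex and non-decreasing, the composition $\varphi \circ \ell_{s}$ is convex for every $s$, and therefore $t \mapsto \kappa(e^{t})$, being the superposition $\int (\varphi \circ \ell_{s})\, \psi(s)\, ds$ of convex functions against the probability density $\psi$, is itself convex. This is exactly $(\delta)$ for $\kappa$, which completes the plan. The clean, self-contained heart of the argument is this convexity computation for $\ell_{s}$ together with the composition-and-averaging step, and the only delicate point remains the equivalence $\kappa \asymp \omega$, i.e.\ the control of $\omega(x+1)/\omega(x)$.
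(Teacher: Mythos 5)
Your construction is sound as far as it goes, and it is genuinely different from the paper's route. The paper gives no written proof at all: it points to the argument inside the proof of Lemma 1.7 of \cite{B-M-T-UltradiffFuncFourierAnal}, where the smoothing is done in the \emph{logarithmic} variable, i.e.\ one mollifies $\varphi(u)=\omega(e^{u})$ itself and sets $\kappa(t)=(\varphi\ast\psi)(\log t)$. That choice makes the ``moreover'' part trivial (a convolution of a convex function with a nonnegative kernel is convex), while the equivalence $\kappa\asymp\omega$ is paid for by a doubling-type bound $\omega(2t)=O(\omega(t))$, which in the Braun--Meise--Taylor framework is part of the definition of a weight function. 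Your additive mollification inverts these costs: the equivalence needs only the weaker increment bound $\omega(x+1)=O(\omega(x))$, but convexity of $t\mapsto\kappa(e^{t})$ is no longer automatic, and your composition argument with $\ell_{s}(t)=\log(e^{t}+s)$, $\ell_{s}''\geq 0$, together with convexity and monotonicity of $\varphi$, is a correct and rather elegant way to recover it.

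The genuine problem is the one you flagged yourself and then left open: the lemma is asserted for \emph{any} weight function, i.e.\ any unbounded non-decreasing $\omega$, whereas your equivalence step invokes $(\alpha)$, which is not among the hypotheses. Your parenthetical fallback (a kernel of shrinking width) does not close this gap: you never verify $(\delta)$ for it, and your $\ell_{s}$ argument visibly breaks once the width depends on $x$; moreover, no repair of this kind can exist, because in the stated generality the first assertion is false. Indeed, for $\omega(t)=2^{2^{\lfloor t\rfloor}}$ any continuous $\kappa$ with $\kappa(t)\leq C\omega(t)$ satisfies $\kappa(n)=\lim_{t\uparrow n}\kappa(t)\leq C\,2^{2^{n-1}}$, while $\omega(n)=2^{2^{n}}$, so $\omega(t)=O(\kappa(t))$ is impossible; hence no smooth (or even continuous) equivalent exists. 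The lemma must therefore be read as tacitly carrying the standing assumptions of the setting in which it is proved and used --- continuity plus doubling in \cite{B-M-T-UltradiffFuncFourierAnal}, or conditions $(\alpha)$ and $(\beta_{\sigma})$ in the paper's only application, Lemma \ref{l:SuitableAnalyticFunc}. Under that reading your proof is complete, and in fact requires slightly less than the cited one; as a proof of the literal statement it has the same (unavoidable) defect, so the hypothesis $\omega(x+1)=O(\omega(x))$ should be promoted from a mid-proof remark to an explicit assumption at the start.
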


Let us now review several notions and results related to our conditions. We first consider the following well-known fact, which we will use in the sequel without mentioning it.
	\begin{lemma}
		If a weight function $\omega$ satisfies $(\beta^{\ast}_{\sigma})$ for $\sigma > 0$, then, $\omega$ satisfies $(\alpha)$ and $(\beta_{\sigma})$.
	\end{lemma}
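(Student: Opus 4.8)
The plan is to treat the two implications separately, deriving $(\beta_\sigma)$ by a direct specialization of the hypothesis and $(\alpha)$ through a doubling estimate. Throughout I read $(\beta^{\ast}_\sigma)$ in its natural ``at $\infty$'' form: there exist constants $C > 0$ and $t_0 \geq 1$ such that $\int_1^\infty \omega(ts) s^{-1-\sigma}\, ds \leq C \omega(t)$ for all $t \geq t_0$.

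For $(\beta_\sigma)$, I would specialize the hypothesis to a single admissible value, say $t = t_0$, obtaining $\int_1^\infty \omega(t_0 s) s^{-1-\sigma}\, ds \leq C \omega(t_0) < \infty$. The substitution $u = t_0 s$ converts this into $t_0^{\sigma} \int_{t_0}^\infty \omega(u) u^{-1-\sigma}\, du < \infty$, so the tail of the $(\beta_\sigma)$ integral converges; the remaining piece $\int_1^{t_0} \omega(u) u^{-1-\sigma}\, du$ is finite since $\omega$ is non-decreasing and hence bounded by $\omega(t_0)$ on the compact interval $[1, t_0]$. Adding the two pieces gives $(\beta_\sigma)$.

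For $(\alpha)$ the key is a doubling inequality $\omega(2t) \leq C'(\omega(t) + 1)$ valid for all $t \geq 0$. To obtain it I would bound the $(\beta^{\ast}_\sigma)$ integral from below by discarding all of the integration range except a fixed compact window, say $s \in [2, 4]$: since $\omega$ is non-decreasing, $\omega(ts) \geq \omega(2t)$ on that window, whence $\int_1^\infty \omega(ts) s^{-1-\sigma}\, ds \geq c_\sigma \omega(2t)$ with $c_\sigma = \int_2^4 s^{-1-\sigma}\, ds > 0$ a constant independent of $t$. Comparing with the upper bound from $(\beta^{\ast}_\sigma)$ yields $\omega(2t) \leq (C/c_\sigma)\omega(t)$ for $t \geq t_0$; for the remaining small values $0 \leq t < t_0$, monotonicity gives $\omega(2t) \leq \omega(2t_0)$, a constant, which is $\leq \omega(2t_0)(\omega(t) + 1)$. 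Taking $C' = \max(C/c_\sigma, \omega(2t_0))$ establishes the doubling estimate for all $t \geq 0$.

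Finally, $(\alpha)$ follows from doubling: given $t, s \geq 0$, by symmetry assume $s \leq t$, so that $t + s \leq 2t$ and monotonicity yields $\omega(t + s) \leq \omega(2t) \leq C'(\omega(t) + 1) \leq C'(\omega(t) + \omega(s) + 1)$, which is $(\alpha)$ with $L = \max(C', 1)$. I expect the only genuinely delicate point to be the bookkeeping forced by the asymptotic nature of the $O$-estimate: since $(\beta^{\ast}_\sigma)$ is guaranteed only for large $t$, both implications require a short separate monotonicity argument to cover small arguments, and one must choose the lower-bound window $[2, 4]$ inside the region where $\omega(ts) \geq \omega(2t)$, so that the extracted constant $c_\sigma$ is genuinely positive and independent of $t$.
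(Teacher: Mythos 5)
Your proof is correct and follows essentially the same route as the paper: $(\beta_\sigma)$ is obtained directly from the hypothesis, and $(\alpha)$ comes from the doubling bound $\omega(2t)\lesssim\omega(t)$, extracted by using monotonicity to bound the $(\beta^{\ast}_\sigma)$ integral from below on $s\geq 2$ (the paper integrates over $[2,\infty)$ rather than your $[2,4]$, an immaterial difference). Your extra bookkeeping for small $t$, forced by reading the $O$-condition only at infinity, is a sound refinement of the same argument.
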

	\begin{proof}
		Trivially $\omega$ satisfies $(\beta_{\sigma})$, while, since $\omega$ is non-decreasing, we have
			\[ \omega(2t) \leq \sigma 2^{\sigma} \int_{2}^{\infty} \omega(s t) s^{-1 - \sigma}\, ds = O(\omega(t)) ,  \]
		which yields $(\alpha)$.
	\end{proof}

We shall also need the next elementary lemma.
	\begin{lemma}
		\label{l:betasigma=>o(t^sigma)}
		If a weight function $\omega$ satisfies $(\beta_{\sigma})$ for $\sigma > 0$, then, $\omega(t) = o(t^{\sigma})$.
	\end{lemma}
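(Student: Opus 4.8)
The plan is to exploit the monotonicity of $\omega$ to bound $\omega(t)$ from below by a \emph{localized} piece of the defining integral in $(\beta_{\sigma})$, and then to invoke the convergence of that integral to see the relevant tail vanishes. The whole argument is an integral comparison, so no machinery beyond elementary estimates is needed.

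First, I would fix $t \geq 1$ and use that $\omega$ is non-decreasing, so that $\omega(s) \geq \omega(t)$ for every $s \in [t, 2t]$. Pulling the constant lower bound out of the integral and computing the elementary integral $\int_{t}^{2t} s^{-1-\sigma}\, ds = \sigma^{-1}(1 - 2^{-\sigma})\, t^{-\sigma}$ yields
\[
\int_{t}^{2t} \omega(s) s^{-1-\sigma}\, ds \;\geq\; \omega(t) \int_{t}^{2t} s^{-1-\sigma}\, ds \;=\; \frac{1 - 2^{-\sigma}}{\sigma} \cdot \frac{\omega(t)}{t^{\sigma}} .
\]
The point of splitting off $[t,2t]$ rather than all of $[t,\infty)$ is precisely to retain a clean lower bound of the form (constant)$\,\cdot\,\omega(t)/t^{\sigma}$; this is the only step where one must be attentive to the direction of the inequality and to keeping the constant $(1-2^{-\sigma})/\sigma$ strictly positive.

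Second, since $(\beta_{\sigma})$ asserts that $\int_{1}^{\infty} \omega(s) s^{-1-\sigma}\, ds$ converges, the tails $\int_{t}^{\infty} \omega(s) s^{-1-\sigma}\, ds$ tend to $0$ as $t \to \infty$. Because $\omega s^{-1-\sigma} \geq 0$, we have $\int_{t}^{2t} \omega(s) s^{-1-\sigma}\, ds \leq \int_{t}^{\infty} \omega(s) s^{-1-\sigma}\, ds$, and combining this with the lower bound above gives
\[
\frac{\omega(t)}{t^{\sigma}} \;\leq\; \frac{\sigma}{1 - 2^{-\sigma}} \int_{t}^{\infty} \omega(s) s^{-1-\sigma}\, ds \;\xrightarrow[t \to \infty]{}\; 0 ,
\]
which is exactly the assertion $\omega(t) = o(t^{\sigma})$. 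There is no real obstacle here; the single idea worth isolating is that monotonicity converts a convergent weighted integral into a pointwise decay rate via the one-step dyadic window $[t,2t]$, and everything else is a direct computation.
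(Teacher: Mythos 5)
Your proof is correct and takes essentially the same approach as the paper: both arguments use the monotonicity of $\omega$ to bound $\omega(t)\,t^{-\sigma}$ by a constant times the tail $\int_{t}^{\infty}\omega(s)s^{-1-\sigma}\,ds$, which tends to $0$ by $(\beta_{\sigma})$. The only cosmetic difference is that the paper integrates the bound $\omega(t)\leq\omega(s)$ over the full tail $[t,\infty)$, yielding the constant $\sigma$, whereas you use the window $[t,2t]$ and get $\sigma/(1-2^{-\sigma})$; the window is not actually needed, since $\int_{t}^{\infty}s^{-1-\sigma}\,ds$ converges on its own.
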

	\begin{proof} 
		For $t>0$, since $\omega$ is non-decreasing, $t^{-\sigma} \omega(t) \leq \sigma \int_{t}^{\infty} \omega(s) s^{-\sigma-1}\, ds = o(1)$.
	\end{proof}	
	
Suppose that $\omega$ satisfies $(\delta)$. As in the statement of Theorem \ref{t:FourierCharProperPilipovic}, we will consider the \emph{Young conjugate $\varphi^{*}$} of the convex function $\varphi$, defined as usual as
	\[ \varphi^{*}(t) = \sup_{s > 0} st - \varphi(s), \qquad t\geq0 . \]	
We have that $\varphi^{*}$ is convex, unbounded, non-decreasing, and $(\varphi^{*})^{*} = \varphi$.  Note that, due to $(\delta)$, either $(\gamma)$ holds or $\omega\asymp \log^{+} := \max\{0, \log\}$.  When $\omega$ satisfies $(\gamma)$, $\varphi^{\ast}$ is finite-valued and $t=o(\varphi^{\ast}(t))$. If $(\gamma)$ does not hold, $\varphi^{\ast}(t)=\infty$ identically on some interval $(t_{0},\infty)$ and is finite valued on $[0,t_0]$.

\begin{remark}\label{rk:infphi}
In the latter case, we might always assume without loss of generality that our weight function is $\omega=\log^{+}$, so that $\varphi^{\ast}(t)=0$ for $t\leq1$ and $\varphi^{\ast}(t)=\infty$ for $t>1$. Even if $\varphi^{\ast}$ is infinite valued,  relations like \eqref{eq:HermiteCharProperPilipovic} have a clear meaning: If \eqref{eq:HermiteCharProperPilipovic} holds for some $r>0$, then $f$ is a finite linear combination of Hermite functions of degree $\leq1/r$; while if \eqref{eq:HermiteCharProperPilipovic} holds for all $r>0$, then $f$ is a scalar multiple of the Gaussian $\phi=h_0$.
\end{remark}

\section{An optimal Phragm\'{e}n-Lindel\"{o}f type result on sectors}
\label{sec:PhragmenLindelof}

Our primary tool in proving the announced results in the Introduction will be the application of the Phragm\'{e}n-Lindel\"{o}f principle on sectors. To this end, we shall establish in this section a weighted form of it and actually show that our result is optimal\footnote{As long as the weight functions satisfy $(\alpha)$ and $(\beta_{\sigma})$.}.
For any $\theta \in [0, 2 \pi)$ and $\varrho>0$ we consider the sector\footnote{If $\varrho\geq2\pi$, it is a subset of the Riemann surface of the logarithm, where its boundary for Theorem \ref{t:PhragmenLindelofChar} should be taken. }
	\[ S_{\theta, \varrho} = \{ z \in \C \mid \theta - \varrho / 2 < \arg z < \theta + \varrho / 2 \} . \]

The main result of this section may then be stated as follows.

	\begin{theorem}
		\label{t:PhragmenLindelofChar}
		Let $\sigma > 0$ and let $\omega$ be a weight function satisfying $(\alpha)$ and $(\beta_{\sigma})$. Then the following statements are equivalent:
			\begin{itemize}
				\item[(I)] $\omega$ satisfies $(\beta_{\sigma}^{\ast})$.
				\item[(II)] There is a constant $A > 0$ such that for any $\theta \in [0, 2 \pi)$, $\varrho \in (0, \pi / \sigma]$, and $\lambda>0$ the following holds: there is a $B>0$ such that
				 if $F$ is an analytic function on the sector $S_{\theta, \varrho}$ with continuous extension to $\overline{S}_{\theta, \varrho}$ for which 
					\[ |F(z)| \leq M e^{\lambda \omega(|z|)} , \qquad z \in \partial S_{\theta, \varrho} , \]
				for some $M=M_\lambda >0$, and
					\[ |F(z)| \lesssim_{\varepsilon} e^{\varepsilon |z|^{\sigma}} , \qquad z \in S_{\theta, \varrho} , \]
				for all $\varepsilon > 0$, then
					\[ |F(z)| \leq B M e^{A \lambda \omega(|z|)} , \qquad  z \in S_{\theta, \varrho} . \]
			\end{itemize}
Furthermore, the constants $A$ and $B$ in \textnormal{(II)} can be chosen to be absolute, in the sense that $A$ only depends on $\omega$ and $\sigma$, while $B$ only depends on $\omega$, $\sigma$, and $\lambda$.
			
	\end{theorem}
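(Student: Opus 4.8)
The plan is to prove the two implications separately, reducing both to a single model computation on a half-plane. Replacing $F(z)$ by $F(e^{i\theta}z)$ lets me assume $\theta = 0$ and, after a rotation, take the sector symmetric about the positive real axis, $\Sigma = S_{0,\varrho} = \{|\arg z| < \varrho/2\}$ with $\varrho \in (0, \pi/\sigma]$. Since $(\alpha)$, $(\beta_{\sigma})$ and $(\beta^{\ast}_{\sigma})$ are all invariant under equivalence of weights, Lemma \ref{l:SmoothWeightFunc} permits me to assume in addition that $\omega$ is continuous, which I need for clean boundary behaviour of Poisson integrals. The conformal map $\zeta = i z^{\pi/\varrho}$ carries $\Sigma$ onto the upper half-plane $H$ and the two bounding rays onto $\R$, with $|z| = |\zeta|^{\varrho/\pi}$; the boundary datum $\omega(|z|)$ thereby becomes $g(t) = \omega(|t|^{\varrho/\pi})$ on $\R$.

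For $\mathrm{(I)}\Rightarrow\mathrm{(II)}$ I would build a harmonic majorant. Let $u \geq 0$ be the Poisson integral of $g$ on $H$, pulled back to $\Sigma$; it is harmonic, continuous up to $\partial\Sigma$, and equals $\omega(|z|)$ there. Its convergence at infinity is exactly $\int_{1}^{\infty}\omega(s)s^{-1-\pi/\varrho}\,ds < \infty$, i.e. $(\beta_{\pi/\varrho})$, which follows from $(\beta_{\sigma})$ since $\pi/\varrho \geq \sigma$. Splitting the Poisson integral at $|t| = 2|z|^{\pi/\varrho}$, the near part is $\lesssim \omega(|z|)$ by $(\alpha)$ (uniformly in $\varrho$, as $2^{\varrho/\pi} \leq 2^{1/\sigma}$), while after the substitution $s = |z|\mu$ the far part is comparable to
\[ \int_{1}^{\infty}\omega(|z|\mu)\,\mu^{-1-\pi/\varrho}\,d\mu = |z|^{\pi/\varrho}\int_{|z|}^{\infty}\omega(s)\,s^{-1-\pi/\varrho}\,ds . \]
This is the crux: by the monotone comparison $\mu^{-1-\pi/\varrho} \leq \mu^{-1-\sigma}$ for $\mu \geq 1$, condition $(\beta^{\ast}_{\sigma})$ implies $(\beta^{\ast}_{\pi/\varrho})$ with no larger constant, so the far part is $O(\omega(|z|))$ uniformly in $\varrho$. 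Hence $u(z) \leq A\,\omega(|z|) + C$ with $A$ depending only on $\omega, \sigma$. Now $\log|F| - \lambda u$ is subharmonic and $\leq \log M$ on $\partial\Sigma$; since $u \geq 0$, the interior hypothesis on $F$ gives directly $\log|F| - \lambda u \leq \log|F| \leq \varepsilon|z|^{\sigma} + C_{\varepsilon}$ for every $\varepsilon$, i.e. minimal type of order $\sigma$. As $\varrho \leq \pi/\sigma$, the minimal-type Phragm\'en--Lindel\"of principle on the sector forces $\log|F| - \lambda u \leq \log M$, whence $|F(z)| \leq BM\,e^{A\lambda\omega(|z|)}$ with $B = e^{\lambda C}$.

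For the converse $\mathrm{(II)}\Rightarrow\mathrm{(I)}$, which is the optimality statement, I would test (II) against a function saturating the far part on the critical sector $\varrho = \pi/\sigma$. With $u$ as above and $\tilde u$ a single-valued harmonic conjugate on the simply connected $\Sigma$, set $F = \exp(\lambda(u + i\tilde u))$, an analytic function with $|F| = e^{\lambda u}$. On $\partial\Sigma$ one has $|F| = e^{\lambda\omega(|z|)}$, so the boundary hypothesis holds with $M = 1$; moreover, because $(\beta_{\sigma})$ makes the tail $\int_{|z|}^{\infty}\omega(s)s^{-1-\sigma}\,ds$ vanish while $\omega(|z|) = o(|z|^{\sigma})$ by Lemma \ref{l:betasigma=>o(t^sigma)}, one gets $u(z) = o(|z|^{\sigma})$, so $|F| \lesssim_{\varepsilon} e^{\varepsilon|z|^{\sigma}}$ for every $\varepsilon$. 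Thus (II) applies and yields $u(z) \leq A\,\omega(|z|) + \lambda^{-1}\log B$. Evaluating on the bisector $z = |z| > 0$, where $\zeta = iR$ with $R = |z|^{\sigma}$, and bounding the Poisson kernel below by $R/(2t^2)$ for $t > 2R$, the same substitution shows $u(z) \gtrsim \int_{1}^{\infty}\omega(|z|\mu)\,\mu^{-1-\sigma}\,d\mu$ up to an $O(\omega(|z|))$ error. Combining the two inequalities gives precisely $(\beta^{\ast}_{\sigma})$.

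The heart of the matter in both directions is the single interior estimate identifying the value of the harmonic majorant with $\int_{1}^{\infty}\omega(|z|\mu)\,\mu^{-1-\pi/\varrho}\,d\mu$, the point being that this integral is $O(\omega(|z|))$ exactly when $(\beta^{\ast}_{\sigma})$ holds. The main obstacle I anticipate is the critical opening $\varrho = \pi/\sigma$, where the order-$\sigma$ growth admitted in the interior coincides with the Phragm\'en--Lindel\"of threshold $\pi/\varrho$ of the sector: here the plain maximum principle is unavailable, and one must genuinely exploit the minimal-type (arbitrary $\varepsilon$) form of the hypothesis together with the tail decay from $(\beta_{\sigma})$ to keep the comparison function below $|z|^{\sigma}$. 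Tracking constants through the $(\alpha)$-iterations and the kernel comparison then yields the asserted dependence of $A$ on $(\omega, \sigma)$ only and of $B$ on $(\omega, \sigma, \lambda)$.
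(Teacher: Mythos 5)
Your proposal follows essentially the same route as the paper's proof: the paper likewise forms the Poisson integral $P_{\kappa}$ of a smoothed equivalent weight $\kappa\asymp\omega$ on the maximal sector, exponentiates it with a harmonic conjugate to obtain an analytic comparison function $F_{\omega}=e^{P_{\kappa}+iV_{\kappa}}$, proves (I)$\Rightarrow$(II) by applying the classical Phragm\'en--Lindel\"of lemma to the quotient $F\,F_{\omega}^{-\lambda/\nu_{1}}$ (your additive subharmonic formulation $\log|F|-\lambda u$ is the multiplicative argument in disguise), and proves (II)$\Rightarrow$(I) by testing (II) against $F_{\omega}$ itself, whose logarithm on the central ray is bounded below by $\int_{1}^{\infty}\omega(ts)s^{-1-\sigma}\,ds$ up to $O(\omega(t))$ --- exactly your bisector computation, phrased contrapositively. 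The one point where your sketch is lighter than the paper is the verification that the harmonic conjugate $\tilde u$ (hence your test function $F=e^{\lambda(u+i\tilde u)}$) extends continuously to $\overline{S}_{\theta,\varrho}$, which is needed to legitimately invoke (II) in the converse; the paper secures this by taking $\kappa$ of class $C^{1}$ (Lemma \ref{l:SmoothWeightFunc}) and using $\partial P_{\kappa}/\partial y=P_{\kappa'}$ together with the Cauchy--Riemann equations.
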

	
For our proof of Theorem \ref{t:PhragmenLindelofChar}, we shall need two auxiliary lemmas. The first one is the following well-known classical version of the 
Phragm\'{e}n-Lindel\"{o}f principle on sectors.
	
	\begin{lemma}[{\cite[Theorem 1.4.3]{BoasBook}}]
		\label{l:PhragmenLindelofClassic} 
		Let $\sigma>0$, $\theta \in [0, 2 \pi)$, and $\varrho \in (0, \pi / \sigma]$. Suppose $F$ is an analytic function on $S_{\theta, \varrho}$ with continuous extension to $\overline{S}_{\theta, \varrho}$ such that, for some $M > 0$,
			\[ |F(z)| \leq M , \qquad z \in \partial S_{\theta, \varrho} , \]
		and, for every $\varepsilon > 0$,
			\[ |F(z)| \lesssim_{\varepsilon} e^{\varepsilon |z|^{\sigma}} , \qquad z \in S_{\theta, \varrho} . \]
		Then,
			\[ |F(z)| \leq M , \qquad  z \in \overline{S}_{\theta, \varrho} . \]
	\end{lemma}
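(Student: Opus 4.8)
The plan is to prove the lemma by the classical device of multiplying $F$ by a suitable decaying comparison factor and invoking the ordinary maximum modulus principle on truncated sectors. First I would reduce to a normalized situation: replacing $F$ by $z \mapsto F(e^{i\theta}z)$ we may assume $\theta = 0$, so that $S_{\theta, \varrho}$ becomes the symmetric sector $S = \{ |\arg z| < \varrho/2 \}$; this rotation preserves $|z|$ and hence leaves all the hypotheses intact. On $S$ we fix the branch of the argument with values in $(-\varrho/2, \varrho/2)$, which is single valued since $S$ is simply connected (also on the Riemann surface of the logarithm when $\varrho$ is large), so that the powers $z^{\tau} := e^{\tau \log z}$ are well defined and analytic on $S$, with $\re(z^{\tau}) = |z|^{\tau} \cos(\tau \arg z)$.

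Assume first that $\varrho < \pi/\sigma$. Then the interval $(\sigma, \pi/\varrho)$ is nonempty, and I would pick $\tau$ in it. The bound $\tau < \pi/\varrho$ guarantees $\tau\varrho/2 < \pi/2$, so that $\cos(\tau \arg z) \geq \cos(\tau \varrho/2) =: c > 0$ for every $z \in \overline{S}$, i.e. $\re(z^{\tau}) \geq c\,|z|^{\tau}$ on the closed sector. For $\delta > 0$ set $G_{\delta}(z) = F(z)\exp(-\delta z^{\tau})$. On the two bounding rays one has $|G_{\delta}(z)| \leq M \exp(-\delta c\,|z|^{\tau}) \leq M$, while on the circular arc $|z| = R$ the growth hypothesis (with any fixed $\varepsilon$, say $\varepsilon = 1$) gives $|G_{\delta}(z)| \lesssim \exp(R^{\sigma} - \delta c R^{\tau})$, which tends to $0$ as $R \to \infty$ because $\tau > \sigma$. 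Hence for all sufficiently large $R$ the modulus of $G_{\delta}$ is $\leq M$ on the entire boundary of the truncated sector $S \cap \{|z| < R\}$, and the maximum modulus principle yields $|G_{\delta}| \leq M$ there; letting $R \to \infty$ gives $|G_{\delta}| \leq M$ on all of $S$. Consequently $|F(z)| \leq M \exp(\delta\,\re(z^{\tau})) \leq M\exp(\delta |z|^{\tau})$, and fixing $z$ and letting $\delta \to 0^{+}$ produces the desired bound $|F(z)| \leq M$. I note that in this regime only finite exponential type of order $\sigma$ is actually used.

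The one delicate point, and the step I expect to be the genuine obstacle, is the borderline opening $\varrho = \pi/\sigma$. Here $\pi/\varrho = \sigma$, so no exponent $\tau$ with $\sigma < \tau < \pi/\varrho$ exists and the comparison factor degenerates: the only admissible homogeneous choice is $\tau = \sigma$, for which $\re(z^{\sigma})$ vanishes on the bounding rays and therefore supplies no decay of $G_{\delta}$ along directions approaching the boundary at infinity, where $F$ may a priori grow like $e^{\varepsilon|z|^{\sigma}}$. This is precisely where the minimal-type hypothesis, that $|F(z)| \lesssim_{\varepsilon} e^{\varepsilon|z|^{\sigma}}$ holds for \emph{every} $\varepsilon > 0$ rather than merely for some fixed $\varepsilon$, becomes indispensable. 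The resolution I would pursue is to map the sector conformally onto a half-plane via $w = z^{\sigma}$, reducing the claim to the statement that a function of minimal exponential type in a half-plane which is bounded on the boundary line is bounded throughout; this critical case cannot be settled by a single power comparison and constitutes the sharp content of \cite[Theorem 1.4.3]{BoasBook}, where the minimal-type refinement is exploited through a more careful limiting argument. Since we invoke the lemma only as a cited classical input, it suffices for our purposes to record the self-contained subcritical argument above and to rely on the reference for the equality case.
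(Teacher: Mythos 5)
The paper never actually proves this lemma: it is quoted verbatim as \cite[Theorem 1.4.3]{BoasBook}, so there is no internal argument to compare yours against, and any self-contained proof is already more than the paper supplies. Your treatment of the subcritical openings $\varrho < \pi/\sigma$ is the standard comparison-function argument and is correct: the choice $\tau \in (\sigma, \pi/\varrho)$, the lower bound $\re (z^{\tau}) \geq \cos(\tau\varrho/2)\,|z|^{\tau}$ on the closed sector, the maximum modulus principle on truncated sectors, and the two limits $R \to \infty$ and $\delta \to 0^{+}$ are all in order. The genuine gap is that the case you leave to the reference, $\varrho = \pi/\sigma$, is exactly the case this paper needs: the comparison function $F_{\omega}$ of Lemma \ref{l:SuitableAnalyticFunc} is constructed on the critical sector $S_{\theta,\pi/\sigma}$, and in the proof of Theorem \ref{t:PhragmenLindelofChar}, implication (I) $\Rightarrow$ (II), Lemma \ref{l:PhragmenLindelofClassic} is applied to $G_{\lambda} = F\, F_{\omega}^{-\lambda/\nu_{1}}$ on $S_{\theta,\varrho}$ with $\varrho$ allowed to equal $\pi/\sigma$. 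Moreover, as you yourself observe, your subcritical argument uses the growth hypothesis for a single $\varepsilon$ only; the ``for every $\varepsilon>0$'' hypothesis---which is the real content of the statement (compare $F(z)=e^{z}$ on a half-plane, which has type $1$ and is bounded on the boundary but unbounded inside)---matters only in the critical case you skip. So, as a standalone proof of the statement, the attempt is incomplete precisely where the statement gets used.

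The critical case can, however, be finished with the tools you already set up, so deferring to the reference is unnecessary. Carry out your conformal reduction $w = z^{\sigma}$ (legitimate also on the Riemann surface of the logarithm when $\pi/\sigma > 2\pi$): $G(w) = F(w^{1/\sigma})$ is analytic on $H = \{\re w > 0\}$, continuous on $\overline{H}$, with $|G| \leq M$ on $i\R$ and $|G(w)| \leq C_{\varepsilon} e^{\varepsilon |w|}$ on $H$ for every $\varepsilon > 0$. Fix $\delta > 0$ and set $G_{\delta}(w) = G(w) e^{-\delta w}$. Then $|G_{\delta}| \leq M$ on $i\R$, while on $(0,\infty)$ the minimal-type hypothesis with $\varepsilon = \delta/2$ gives $|G_{\delta}(t)| \leq C_{\delta/2} e^{-\delta t/2} \leq C_{\delta/2}$. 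Now apply your own subcritical case (opening $\pi/2$, order-$1$ growth, so e.g. $\tau = 3/2$ is admissible) to $G_{\delta}$ on each of the two quarter-planes into which $(0,\infty)$ splits $H$; this yields $|G_{\delta}| \leq \max(M, C_{\delta/2})$ on all of $H$, i.e. $G_{\delta}$ is bounded. Finally, for a bounded analytic function on $H$ that is $\leq M$ on $i\R$, consider $G_{\delta}(w)^{n}/(1+w)$: it is $\leq M^{n}$ on $i\R$ and tends to $0$ as $|w| \to \infty$ in $\overline{H}$, so the maximum modulus principle on large half-discs gives $|G_{\delta}(w)|^{n} \leq M^{n} |1+w|$, and letting $n \to \infty$ yields $|G_{\delta}| \leq M$ on $H$. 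Since $|G(w)| \leq M e^{\delta \re w}$ for every $\delta > 0$, letting $\delta \to 0^{+}$ gives $|G| \leq M$, i.e. $|F| \leq M$ on the critical sector. With this paragraph added your proof is complete and self-contained; without it, it is a correct proof of a strictly weaker statement together with the same citation the paper already makes.
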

	
Next, we need the construction of a useful class of analytic functions on sectors with certain desirable growth properties with respect to the weight function $\omega$.

	\begin{lemma}
		\label{l:SuitableAnalyticFunc}
		Let $\sigma > 0$. Let $\omega$ be a weight function satisfying $(\alpha)$ and $(\beta_{\sigma})$. For any $\theta \in [0, 2\pi)$ there is an analytic function $F_{\omega}$ on $S_{\theta, \pi / \sigma}$ with continuous extension to $\overline{S}_{\theta, \pi / \sigma}$ such that for certain $c, \nu_{1}, \mu > 0$ it holds
			\begin{itemize}
				\item[(i)] for every $\varepsilon > 0$ there is a $C_{\varepsilon} > 0$ such that
					\[ c e^{\nu_{1} \omega(|z|)} \leq |F_{\omega}(z)| \leq C_{\varepsilon} e^{\varepsilon |z|^{\sigma}} , \qquad z \in S_{\theta, \pi / \sigma} ; \]
				\item[(ii)] $|F_{\omega}(z)| \lesssim e^{\mu \omega(|z|)}$ for $z \in \partial S_{\theta, \pi / \sigma}$;
				\item[(iii)] $\int_{0}^{\infty} \frac{\omega(ts)}{s^{1-\sigma} + s^{\sigma + 1}}\, ds = O\left(\log |F_{\omega}(t e^{i \theta})|\right)$;
				\item[(iv)] if in addition $\omega$ satisfies $(\beta_{\sigma}^{\ast})$, there are $C, \nu_{2} > 0$ such that $|F_{\omega}(z)| \leq C e^{\nu_{2} \omega(|z|)}$ for $z \in S_{\theta, \pi / \sigma}$.
				
			\end{itemize}
	\end{lemma}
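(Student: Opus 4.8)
The plan is to reduce the construction to the right half-plane by a conformal map and to produce $F_{\omega}$ as the exponential of the analytic completion of a Poisson integral. After rotating through $\theta$ and applying $z\mapsto (e^{-i\theta}z)^{\sigma}$, the sector $S_{\theta,\pi/\sigma}$ is mapped conformally onto the right half-plane $\{\re w>0\}$ (on the Riemann surface of the logarithm when $\pi/\sigma>2\pi$, as in the footnote), its two bounding rays going to the imaginary axis. Writing $\Omega(r):=\omega(r^{1/\sigma})$ for the transplanted weight, I would first record the elementary but crucial observation that, via the substitution $\tau=r^{\sigma}$, condition $(\beta_{\sigma})$ is \emph{equivalent} to $\int^{\infty}\Omega(\tau)\tau^{-2}\,d\tau<\infty$, which is exactly the condition guaranteeing convergence of the Poisson integral of the boundary datum $\Omega(|\tau|)$ on the imaginary axis. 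By Lemma \ref{l:SmoothWeightFunc} I may assume $\omega$, hence $\Omega$, to be continuous.

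Next I would let $u$ be the Poisson extension of $\Omega(|\tau|)$ to the half-plane,
\[ u(w)=\frac{1}{\pi}\int_{-\infty}^{\infty}\frac{(\re w)\,\Omega(|\tau|)}{(\im w-\tau)^{2}+(\re w)^{2}}\,d\tau , \]
a nonnegative harmonic function, and take a single-valued harmonic conjugate $v$ (the domain being simply connected), so that $F_{\omega}:=\exp(u+iv)$, pulled back to the sector, is analytic with $\log|F_{\omega}|=u$ and continuous up to the boundary by standard Poisson theory. Property (ii) is then immediate, since on the boundary rays the integral recovers the datum and $u=\Omega(|\tau|)=\omega(|z|)$. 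Property (iii) is a direct computation: evaluating on the central ray $z=te^{i\theta}$ (so $w=t^{\sigma}>0$) and substituting $\tau=t^{\sigma}p$, then $s=p^{1/\sigma}$, identifies $u(t^{\sigma})$ with a fixed positive multiple of $\int_{0}^{\infty}\omega(ts)(s^{1-\sigma}+s^{\sigma+1})^{-1}\,ds$, which is even sharper than the asserted $O$-bound.

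The analytic core is the set of pointwise estimates on $u$. For the upper bound in (i) I would split the Poisson integral at $|\tau|=2|w|$: the near part is dominated by $\Omega(2|w|)=o(|w|)$ (using $\Omega(r)=o(r)$, i.e.\ Lemma \ref{l:betasigma=>o(t^sigma)} after transplanting), while on the far part $(\im w-\tau)^{2}+(\re w)^{2}\gtrsim\tau^{2}$, so it is controlled by $|w|\int_{2|w|}^{\infty}\Omega(\tau)\tau^{-2}\,d\tau=o(|w|)$, the tail of the convergent integral from $(\beta_{\sigma})$ vanishing; together this gives $u(w)=o(|z|^{\sigma})$ uniformly, hence $|F_{\omega}(z)|\le C_{\varepsilon}e^{\varepsilon|z|^{\sigma}}$. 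For the lower bound in (i) I would estimate $u$ from below by restricting the integral to a suitable interval: in the interior ($\re w\gtrsim|w|$) the annulus $\{|w|\le|\tau|\le2|w|\}$ carries a definite fraction of the Poisson mass and there $\Omega(|\tau|)\ge\Omega(|w|)$; near the boundary I localize instead around $\tau\approx\im w$, where $|\tau|\asymp|w|$, and invoke the doubling property furnished by $(\alpha)$ to replace $\Omega(c|w|)$ by a constant times $\Omega(|w|)$; this yields $u(w)\gtrsim\Omega(|w|)=\omega(|z|)$, whence the claimed $ce^{\nu_{1}\omega(|z|)}$. Finally, for (iv) I would revisit the same split: under $(\beta_{\sigma}^{\ast})$ the far tail improves to $|w|\int_{2|w|}^{\infty}\Omega(\tau)\tau^{-2}\,d\tau=O(\Omega(|w|))$ — indeed, after substitution this is a tail of $(\beta_{\sigma}^{\ast})$ — while the near part is $\lesssim\Omega(2|w|)\lesssim\Omega(|w|)$ by doubling, so $u(w)=O(\omega(|z|))$ throughout the sector.

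I expect the main obstacle to be the pointwise control of the Poisson extension $u$ uniformly across the whole half-plane, and in particular the lower bound in (i): the crude estimate bounding $u$ below by the weight-mass of the annulus $\{|\tau|\asymp|w|\}$ degenerates as $w$ approaches the boundary, and one must switch to localizing the Poisson kernel around $\tau\approx\im w$ and use the doubling consequence of $(\alpha)$. Ensuring that all constants are uniform in $\theta$, and that the continuous extension and single-valued conjugate behave well (also on the Riemann surface when $\sigma\le 1/2$), is the remaining bookkeeping.
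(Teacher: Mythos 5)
Your construction is essentially the paper's: transplant the weight to the right half-plane via $z\mapsto (e^{-i\theta}z)^{\sigma}$, take the Poisson extension $u$ of the regularized weight, exponentiate $u+iv$ with $v$ a harmonic conjugate, and pull back. Your pointwise estimates on $u$ are correct and complete: the split of the Poisson integral at $|\tau|=2|w|$ gives the $o(|w|)$ upper bound from $(\beta_{\sigma})$; the localization of the kernel near $\tau\approx \im w$ combined with the doubling consequence of $(\alpha)$ gives the lower bound $u(w)\gtrsim \Omega(|w|)$; the substitution on the central ray identifies $u(t^{\sigma})$ with a constant multiple of $\int_{0}^{\infty}\omega(ts)(s^{1-\sigma}+s^{\sigma+1})^{-1}\,ds$, which yields (iii); and the improved tail bound under $(\beta_{\sigma}^{\ast})$ gives (iv). Where the paper simply cites \cite[Lemma 2.2]{B-M-T-UltradiffFuncFourierAnal} for the half-plane bounds, you prove them directly, so on this score your argument is more self-contained.

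There is, however, one genuine gap: the continuous extension of $F_{\omega}$ to $\overline{S}_{\theta,\pi/\sigma}$, which the lemma asserts and which the proof of Theorem \ref{t:PhragmenLindelofChar} actually needs (both directions feed functions built from $F_{\omega}$ into Lemma \ref{l:PhragmenLindelofClassic}, which requires continuity on the closed sector). Standard Poisson theory gives the boundary continuity of $u=\log|F_{\omega}|$, but \emph{not} of the conjugate $v$: the boundary values of $v$ are a conjugate function (Hilbert transform) of the boundary datum, and the conjugate of a merely continuous function need not be continuous --- a continuous monotone weight can have an arbitrarily bad modulus of continuity, and nothing in your argument excludes this. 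This is precisely why the paper invokes Lemma \ref{l:SmoothWeightFunc} to replace $\omega$ by a \emph{smooth} equivalent weight $\kappa$ (you only ask for continuity): integration by parts shows that $\kappa'$ again satisfies $(\beta_{1})$, hence $\partial P_{\kappa}/\partial y = P_{\kappa'}$ is itself a convergent Poisson integral with continuous extension to the closed half-plane, and integrating the Cauchy--Riemann equation $\partial V_{\kappa}/\partial x=-\partial P_{\kappa}/\partial y$ in $x$ then shows that $V_{\kappa}$ extends continuously to the boundary. You flagged this point as ``remaining bookkeeping,'' but it is the one step that requires an idea beyond the kernel estimates, and the appeal to ``standard Poisson theory'' does not cover it.
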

	
	\begin{proof}
		It suffices to consider the case where $\sigma = 1$ and $\theta = 0$. Indeed, for the general case we set $\widetilde{\omega}(t) = \omega(t^{1/\sigma})$ and let $G = F_{\widetilde{\omega}}$ be the function corresponding to $\sigma = 1$ and $\theta = 0$; we then obtain that $F_{\omega}(z) = G(z^{\sigma} e^{- i \sigma \theta})$, for $z \in \overline{S}_{\theta, \pi / \sigma}$, satisfies the desired properties.
		
		We therefore assume that $\sigma = 1$ and $\theta = 0$.
		Using Lemma \ref{l:SmoothWeightFunc}, there exists a weight function $\kappa \in C^{1}(\R)$ such that $\kappa \asymp \omega$. 
		Hence, for some $c_{\kappa}, A_{\kappa}, C_{\kappa} > 0$ we have that $c_{\kappa} \omega(t) - A_{\kappa} \leq \kappa(t) \leq C_{\kappa} (\omega(t) + 1)$ for all $t \in \R$.
		Put
			\[ P_{\kappa}(x, y) = \frac{x}{\pi} \int_{-\infty}^{\infty} \frac{\kappa(t)}{x^{2} + (y - t)^{2}}\, dt . \]
		By \cite[Lemma 2.2]{B-M-T-UltradiffFuncFourierAnal} and Lemma \ref{l:betasigma=>o(t^sigma)}
		the harmonic function $P_{\kappa}$
		is such that for every $\varepsilon > 0$ there is some $C_{\varepsilon} > 0$ for which
			\[  \frac{1}{4} \kappa(|z|) \leq P_{\kappa}(x,y) \leq \varepsilon |z| + C_{\varepsilon} , \qquad z=x+iy \in S_{0, \pi } , \]
		and $P_{\kappa}(x, 0) = \pi^{-1} \int_{-\infty}^{\infty}(1 + t^{2})^{-1}  \kappa(x t) dt$ for $x > 0$.
		In particular, it follows that
			\[ \frac{c_{\kappa}}{4} \omega(|z|) - \frac{A_{\kappa}}{4} \leq P_{\kappa}(x, y) \leq \varepsilon |z| + C_{\varepsilon} , \qquad z=x+iy \in S_{0, \pi} , \]
		and
			\[ \frac{2 c_{\kappa}}{\pi} \int_{0}^{\infty} \frac{\omega(x t)}{1 + t^{2}} dt \leq P_{\kappa}(x, 0) + A_{\kappa} , \qquad x > 0 .  \]
		If $(\beta_{1}^{\ast})$ holds, it follows that
			\begin{align*} 
				P_{\kappa}(x, y) - C_{\kappa} L \omega(y) 
				&= \frac{x}{\pi} \int_{-\infty}^{\infty} \frac{\kappa(t) - C_{\kappa} L \omega(y)}{x^{2} + (y - t)^{2}} dt
				\leq C_{\kappa} \frac{x}{\pi} \int_{-\infty}^{\infty} \frac{\omega(t) - L \omega(y)}{x^{2} + (y - t)^{2}} dt + C_{\kappa} \\
				&\leq \frac{C_{\kappa} L}{\pi} \int_{-\infty}^{\infty} \frac{\omega(x s)}{1 + s^{2}}\, ds + C_{\kappa} + L \leq R \omega(x) + C_{\kappa} + L , 
			\end{align*}
		for some $R \geq C_{\kappa} L$.
		Hence, $P_{\kappa}(x, y) \leq 2 R \omega(|z|) + C_{\kappa} + L$. 
		As $\kappa$ is differentiable it follows \cite[p.~109]{K-IntroHpSp} that $\lim_{x \to 0^{+}} P_{\kappa}(x, y) = \kappa(y)$ for any $y \in \R$, and we may thus define $P_{\kappa}(0, y) = \kappa(y)$ extending it to a continuous function on $\overline{S}_{0, \pi}$. Let $V_{\kappa} : S_{0, \pi} \to \R$ be a harmonic conjugate for $P_{\kappa}$ in $S_{0, \pi}$. 
		Integration by parts shows that $\kappa'$ satisfies $(\beta_{1})$, hence $\partial P_{\kappa}/\partial y = P_{\kappa'}$ has continuous extension to $\overline{S}_{0, \pi}$. Then, by the Cauchy-Riemann equations,
			\begin{align*} 
				\lim_{x \to 0^{+}} V_{\kappa}(x, y) 
				&= V_{\kappa}(1, y) - \lim_{x \to 0^{+}} \int_{x}^{1} \frac{\partial V_{\kappa}}{\partial x}(u, y)\, du 
				= V_{\kappa}(1, y) + \lim_{x \to 0^{+}} \int_{x}^{1} \frac{\partial P_{\kappa}}{\partial y}(u, y)\, du \\
				&= V_{\kappa}(1, y) + \int_{0}^{1} \frac{\partial P_{\kappa}}{\partial y}(u, y)\, du . 
			\end{align*}
		Consequently, we may extend $V_{\kappa}$ to a continuous function on $\overline{S}_{0, \pi}$. Finally, we set $F_{\omega}(z) = \exp[P_{\kappa}(x, y) + i V_{\kappa}(x, y)]$.
		Then the desired properties (i)--(iii) and (iv) if $\omega$ satisfies $(\beta_{1}^{\ast})$ follow for $F_{\omega}$ by setting $c = \exp[-A_{\kappa}/4]$, $C = \exp[C_{\kappa} + L]$, $\nu_{1} = c_{\kappa} / 4$, $\nu_{2} = 2 R$, and $\mu = C_{\kappa}$.
	\end{proof}
	
	\begin{proof}[Proof of Theorem \ref{t:PhragmenLindelofChar}]		
		(I) $\Rightarrow$ (II). Let $\theta, \varrho, F$, $\lambda$ be as in (II). Let $F_{\omega}$ be as in Lemma \ref{l:SuitableAnalyticFunc} corresponding to $\sigma$ and $\theta$. Consider $G_{\lambda}(z) = F(z) F_{\omega}(z)^{-\lambda / \nu_{1}}$. Using the lower bound from Lemma \ref{l:SuitableAnalyticFunc}(i), $|G_{\lambda}(z)| \leq c^{-\lambda / \nu_{1}} M$ on $\partial S_{\theta, \varrho}$. Also, $|G_{\lambda}(z)| \lesssim e^{\varepsilon |z|^{\sigma}}$ for any $\varepsilon > 0$. Applying Lemma \ref{l:PhragmenLindelofClassic}, we get that $|F(z)| \leq c^{-\lambda / \nu_{1}} M |F_{\omega}(z)|^{\lambda / \nu_{1}}$. The result follows from Lemma \ref{l:SuitableAnalyticFunc}(iv) with $A=\nu_{2} / \nu_{1}$ and $B= (C / c)^{\lambda / \nu_{1}}$.
		
		(II) $\Rightarrow$ (I). Let $F=F_{\omega}$ be the function as in Lemma \ref{l:SuitableAnalyticFunc}.
		If $(\beta_{\sigma}^{\ast})$ does not hold, we obtain from 	Lemma \ref{l:SuitableAnalyticFunc}(iii),
\[ \limsup_{t\to\infty} |F(te^{i\theta})|e^{-\lambda \omega(t)}=\infty
\]
for each $\lambda>0$,  proving that the statement (II) does not follow for this $F$.
	\end{proof}

\section{Proofs of the main theorems}
\label{sec:FourierCharPilipovicSp}

In this section, we shall always assume that the weight function $\omega$ satisfies $(\delta)$ so that $\varphi^{*}$ is well-defined (cf. Remark \ref{rk:infphi}). Our proofs of the main theorems stated in the Introduction will be preceded by some intermediate results, and we need to introduce some notions in preparation.

As in \cite{T-ImagFuncDistSpBargmannTrans}, let 
	\[ \H_{0}(\R) = \text{span } \{ h_{n} \mid n \in \N \} , \]
and let $\H^{\prime}_{0}(\R)$ be its dual space. In particular, we may always write $f \in \H'_{0}(\R)$ as in \eqref{eq:Hexpansion}, where now $H(f, n) = \ev{f}{h_{n}}$ stands for dual pairing evaluation. One should regard $\H'_{0}(\R)$ as a large ultradistribution space that is Fourier invariant. Indeed, since 
$\widehat{h}_{n}=(-i)^{n}h_n$, the Fourier transform can naturally be extended to an automorphism on $\H'_{0}(\R)$. 
 
We now introduce spaces via the bounds \eqref{eq:HermiteCharProperPilipovic}. For any $r > 0$, we define the Banach space
	\[ \H_{\omega, r}(\R) = \{ f \in \H^{\prime}_{0}(\R) \mid \|f\|_{\H_{\omega, r}} < \infty \},
	\quad \|f\|_{\H_{\omega, r}} := \sup_{n \in \N} \left (
	\frac{|H(f, n)|}{\sqrt{n!}} e^{\frac{1}{r} \varphi^{*}(rn)} \right ). \]
Then, we consider the two spaces
	\[ \H_{\omega}(\R) = \bigcup_{r > 0} \H_{\omega, r}(\R) , \qquad \H_{0, \omega}(\R) = \bigcap_{r > 0} \H_{\omega, r}(\R) , \]
endowed with their natural $(LB)$-space and  Fr\'{e}chet space topologies, respectively.
Note that $\H_{\omega}(\R) = \H_{\kappa}(\R)$ and $\H_{0, \omega}(\R) = \H_{0, \kappa}(\R)$ whenever $\omega \asymp \kappa$ (and if $\kappa$ also satisfies $(\delta)$). 

If $\omega\asymp \log^{+}$, then $\mathcal{H}_{\log^{+}}(\mathbb{R})=\mathcal{H}_{0}(\mathbb{R})$, while $\mathcal{H}_{0,\log^{+}}(\mathbb{R})=\{c\phi \:| \: c\in\mathbb{C}\}$, with $\phi$ being the standard Gaussian. 
When $\omega$ satisfies $(\gamma)$ and $\omega(t)=o(t^{2})$, we have the following continuous dense inclusions:
	\[ \H_{0}(\R) \subsetneq \H_{0, \omega}(\R) \subsetneq \H_{\omega}(\R) \subsetneq \S_{1/2}(\R) \subsetneq \S(\R), \]
where the inclusion into  $\S_{1/2}(\R)$ follows from Zhang's classical characterization of the Gelfand-Shilov spaces \cite{Zhang63}.

The classical Pilipovi\'{c} spaces are given in the following example.

	\begin{example}
		\label{ex:ClassicalPilipovicSp}
		Consider the set $\overline{\R_{\flat}} = [0, \infty) \cup \{ \flat_{\sigma} \}_{\sigma > 0}$, where besides the usual ordering in $\R$, the elements $\flat_{\sigma}$ in $\overline{\R_{\flat}}$ are ordered by the relations $x_{1} < \flat_{\sigma_{1}} < \flat_{\sigma_{2}} < x_{2}$, when $\sigma_{1}, \sigma_{2}, x_{1}$, and $x_{2}$ are positive real numbers such that $\sigma_{1} < \sigma_{2}$, $x_{1} < 1/2$, and $x_{2} \geq 1/2$. For $s \in \overline{\R_{\flat}}$ with $s \leq 1/2$, let $\omega_{s}$ be as in \eqref{eq:WeightFuncClassicalPilipovicSp}. Then, $\omega_{s}$ is a weight function satisfying $(\alpha)$, $(\gamma)$ (except if $s = 0$), and $(\delta)$. Moreover, $\omega_{s}$ satisfies $(\beta_{2}^{\ast})$ if and only if $s < 1/2$. Put $\varphi_{s}(t) = \omega_{s}(e^{t})$ and let $\varphi_{s}^{*}$ be its Young conjugate. Then a straightforward computation shows that
			\[ 
				\sqrt{n!} e^{- \frac{1}{r} \varphi^{*}_{s}(rn)} \asymp
					\begin{cases}
						e^{- r n^{\frac{1}{2s}}} , & s \in \R, 0 < s \leq 1/2 , \\
						r^{n} (n!)^{- \frac{1}{2 \sigma}} , & s = \flat_{\sigma} .
					\end{cases}
			 \]
		In particular, for $s \in \overline{\R_{\flat}}$ and $s \leq 1/2$, we have that $\H_{\omega_{s}}(\R) = \H_{s}(\R)$ if $s < 1/2$, and $\H_{0, \omega_{s}} = \H_{0, s}(\R)$ if $s > 0$, where $\H_{s}(\R)$ and $\H_{0, s}(\R)$ are the spaces as introduced in \cite{T-ImagFuncDistSpBargmannTrans}. 
	\end{example}

We may now focus on the proofs of Theorems \ref{t:TFCharH01/2} and \ref{t:FourierCharProperPilipovic}. Note that Corollary \ref{t:TFCharClassicalPilipovicSp} would then automatically follow from Example \ref{ex:ClassicalPilipovicSp}.
We start with the following observation that the Hermite coefficients are summable with respect to the weights.

	\begin{lemma}
		\label{l:HermiteCoeffSummable}
		$f \in \H_{0}^{\prime}(\R)$ belongs to $\H_{\omega}(\R)$ (resp. $\H_{0, \omega}(\R)$) if and only if for some $r > 0$ (for every $r > 0$):
			\[ \sum_{n \in \N} \frac{H(f, n)}{\sqrt{n!}} e^{\frac{1}{r} \varphi^{*}(r n)} < \infty . \] 
	\end{lemma}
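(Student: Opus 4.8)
The plan is to show that the supremum defining $\|f\|_{\H_{\omega,r}}$ and the corresponding $\ell^1$-sum describe the same sequences once the parameter is allowed to move by an arbitrarily small amount. One implication is immediate and respects both quantifiers: if the series converges for some $r$ (resp. for every $r$), then its general term is bounded, so $\frac{|H(f,n)|}{\sqrt{n!}}e^{\frac1r\varphi^*(rn)}\le\sum_m\frac{|H(f,m)|}{\sqrt{m!}}e^{\frac1r\varphi^*(rm)}<\infty$, whence $\|f\|_{\H_{\omega,r}}<\infty$ for the same $r$. So only the passage from a supremum bound to summability needs an argument. Fixing $f$ with $\|f\|_{\H_{\omega,r}}<\infty$, so that $|H(f,n)|\le\|f\|_{\H_{\omega,r}}\sqrt{n!}\,e^{-\frac1r\varphi^*(rn)}$, I would estimate the $r'$-series for $r'<r$ termwise by
\[ \frac{|H(f,n)|}{\sqrt{n!}}e^{\frac1{r'}\varphi^*(r'n)}\le\|f\|_{\H_{\omega,r}}\,e^{-D_n},\qquad D_n:=\frac1r\varphi^*(rn)-\frac1{r'}\varphi^*(r'n). \]
Everything then reduces to proving that, for a suitable $r'\in(0,r)$, one has $\sum_n e^{-D_n}<\infty$.

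I would deduce this from two properties of $\varphi^*$. First, convexity of $\varphi^*$ makes $r\mapsto\frac1r\varphi^*(rn)$ non-decreasing; equivalently, writing $\kappa=r/r'$ and $\Theta(u)=\frac1\kappa\varphi^*(\kappa u)-\varphi^*(u)$, one has $D_n=\frac1{r'}\Theta(r'n)$, and $\Theta'(u)=(\varphi^*)'(\kappa u)-(\varphi^*)'(u)\ge 0$, so $\Theta$ is non-decreasing and $D_n\ge0$ is non-decreasing in $n$. Second, $(\delta)$ forces either $\omega\asymp\log^+$ or $(\gamma)$, and in the latter case $t=o(\varphi^*(t))$, i.e. $\varphi^*$ is superlinear. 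Here is the crux: combining superlinearity with the Young equality $u(\varphi^*)'(u)-\varphi^*(u)=\varphi\big((\varphi^*)'(u)\big)$ and the convexity bound $\varphi^*(\kappa u)\ge\varphi^*(u)+(\varphi^*)'(u)(\kappa-1)u$ gives
\[ D_n\ge\frac{\kappa-1}{\kappa r'}\,\varphi\big((\varphi^*)'(r'n)\big)\gtrsim\varphi\big((\varphi^*)'(r'n)\big), \]
and this lower bound grows strictly faster than $\log n$, so that $e^{-D_n}\le n^{-2}$ for all large $n$ and the series converges. I expect this quantitative rate --- establishing $D_n/\log n\to\infty$ --- to be the main obstacle, precisely because for very fast-growing $\omega$ the superlinearity of $\varphi^*$ is extremely weak; the monotonicity of $D_n$ together with a careful use of $(\gamma)$ to control $(\varphi^*)'(r'n)$ against $\log n$ is what rescues the estimate.

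Finally I would dispose of the degenerate case $\omega\asymp\log^+$ separately: by Remark \ref{rk:infphi} the bound \eqref{eq:HermiteCharProperPilipovic} then forces $f$ to be a finite linear combination of Hermite functions (for some $r$), respectively a scalar multiple of $\phi=h_0$ (for every $r$), so that the sum and the supremum run over finitely many nonzero terms and agree trivially. Assembling the pieces yields both equivalences: from the supremum bound for some $r$ one gets convergence of the series for the chosen $r'<r$, giving membership in $\H_\omega(\R)$; and from the supremum bound for every $r$ one gets, applying the argument with $r=2r'$, convergence of the series for every $r'$, giving membership in $\H_{0,\omega}(\R)$. Together with the trivial direction, this proves the stated characterizations.
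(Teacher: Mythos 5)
Your reduction of the lemma to the summability of $\sum_n e^{-D_n}$ is sound, and so are the convexity computations underlying it: the trivial direction, the dispatch of the case $\omega\asymp\log^{+}$, the monotonicity of $r\mapsto\frac1r\varphi^*(rn)$, and the Fenchel--Young lower bound $D_n\ge\frac{\kappa-1}{\kappa r'}\varphi\bigl((\varphi^*)'(r'n)\bigr)$ are all correct. The gap sits exactly at the step you yourself call the crux: the claim that this lower bound grows faster than $\log n$ is \emph{not} a consequence of $(\gamma)$ and $(\delta)$, and no careful use of $(\gamma)$ or of the monotonicity of $D_n$ can rescue it, because it is false. Take $\varphi^*$ piecewise linear and convex, with slope $c_k=k+1$ on $(a_k,a_{k+1})$ and corners chosen recursively by $a_{k+1}=\exp\bigl(k\varphi(c_k)\bigr)$, where $\varphi(c_k)=c_ka_k-\varphi^*(a_k)$; then $\omega(t)=\varphi(\log^{+}t)$ is a weight function satisfying $(\gamma)$ and $(\delta)$. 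For every pair $0<r'<r$ and every integer $n$ with $r'n,\,rn\in(a_k,a_{k+1})$ (which is most of the block, since $a_{k+1}\ge e^{k a_k}$) one computes \emph{exactly}
\[
D_n=\frac{r-r'}{rr'}\,\varphi(c_k)=\frac{r-r'}{rr'}\cdot\frac{\log a_{k+1}}{k},
\]
so $e^{-D_n}=a_{k+1}^{-(r-r')/(rr'k)}$, and summing over the $\gtrsim a_{k+1}/(2r)$ integers in that block gives a contribution $\gtrsim a_{k+1}^{\,1-(r-r')/(rr'k)}\to\infty$. Hence $\sum_n e^{-D_n}=\infty$ for \emph{every} $r'<r$; in fact the lemma itself fails for this $\omega$, since $H(f,n)=\sqrt{n!}\,e^{-\frac1r\varphi^*(rn)}$ defines an element of $\H_{\omega,r}(\R)\subset\H_{\omega}(\R)$ whose series diverges for every $r''>0$.

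The missing ingredient is condition $(\alpha)$. The paper's own proof is a two-line citation of \cite{D-N-V-NuclGSSpKernThm}, and the results invoked there are established for Braun--Meise--Taylor-type weights, for which $(\alpha)$ is part of the setup; in every application made in this paper $(\alpha)$ is indeed available, because $(\beta^*_{\sigma})$ implies it (see the unnumbered lemma in Section \ref{sec:WeightFunc}). Once $(\alpha)$ is in hand, your delicate asymptotic analysis is unnecessary, because conjugation produces a geometric, not barely super-logarithmic, gain: from $\omega(2t)\le L(2\omega(t)+1)$ one gets $\varphi(u+\log 2)\le 2L\varphi(u)+C$, and taking Young conjugates (which reverses pointwise inequalities) yields
\[
\varphi^*(t)\ \ge\ 2L\,\varphi^*\!\Bigl(\frac{t}{2L}\Bigr)+t\log 2-C .
\]
Evaluating at $t=rn$ and dividing by $r$ gives, with $r'=r/(2L)$,
\[
\frac1r\varphi^*(rn)\ \ge\ \frac1{r'}\varphi^*(r'n)+n\log 2-\frac{C}{r},
\]
that is, $D_n\ge n\log 2-C/r$, so $\sum_n e^{-D_n}\lesssim\sum_n 2^{-n}<\infty$. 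With this choice of $r'$ your quantifier bookkeeping at the end goes through verbatim; this conjugate inequality is the standard route, and it is what the citation in the paper is implicitly relying on.
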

	
	\begin{proof} According to our convention (cf. Remark \ref{rk:infphi}), this holds if $\omega\asymp \log^{+}$. If $\omega$ satisfies $(\gamma)$, this follows from  \cite[Proposition 2.1, Lemma 3.2, and Lemma 3.5(c)]{D-N-V-NuclGSSpKernThm}.
	\end{proof}

Our main tool will be the use of the Bargmann transform \cite{B-HilbertSpAnalFuncAssocIntTrans}, allowing us to work with entire functions. On $\H_{0}^{\prime}(\R)$ we define the \emph{Bargmann transform} as
	\[ \B : \H_{0}'(\R) \to \C[[z]] ,
	\quad f \mapsto \sum_{n \in \N} \frac{H(f, n)}{\sqrt{n!}} z^{n} , \]
where $\C[[z]]$ denotes the ring of formal power series over $\C$ (which is denoted by
$\maclA _0'(\mathbb C)$ in \cite{T-ImagFuncDistSpBargmannTrans}).

For any $f \in L^{2}(\R)$ we then have that $\B f$ is an entire function with
	\[ \B f(z) = \pi^{-\frac{1}{4}} \int_{-\infty}^{\infty} f(t) e^{-\frac{1}{2} (z^{2} + t^{2}) + \sqrt{2} t z}\, dt , \qquad z\in\mathbb{C}. \]
In particular, for the Hermite functions we have
	\begin{equation}
		\label{eq:BargmannHermite}
		\B h_{n}(z) = \frac{z^{n}}{\sqrt{n!}} , \qquad n \in \N . 
	\end{equation} 
It is now straightforward to characterize our spaces defined via Hermite expansions by using the Bargmann transform.
	
	\begin{proposition}
		\label{p:PilipovicBargmanChar}
		Let $\omega$ be a weight function satisfying  $(\delta)$. Then, for an entire function $F$ there exists an $f \in \H_{\omega}(\R)$ (resp. $f \in \H_{0, \omega}(\R)$) such that $F = \B f$ if and only if for some $\lambda > 0$ (for every $\lambda > 0$):
			\[ |F(z)| \lesssim e^{\lambda \omega(|z|)} , \qquad  z \in \C . \]
	\end{proposition}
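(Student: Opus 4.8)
The plan is to translate both sides into equivalent statements about the Taylor coefficients of $F$ and then match them through the Fenchel--Young duality between $\varphi$ and $\varphi^{*}$. Writing $F(z)=\sum_{n\in\N}a_{n}z^{n}$, the identity $\B f = F$ forces, by comparing with $\B f(z)=\sum_{n}\frac{H(f,n)}{\sqrt{n!}}z^{n}$, the relation $a_{n}=H(f,n)/\sqrt{n!}$; hence the only candidate preimage is the $f\in\H_{0}'(\R)$ determined by $H(f,n)=a_{n}\sqrt{n!}$, and $f\in\H_{\omega}(\R)$ (resp. $\H_{0,\omega}(\R)$) is by definition exactly the estimate $|a_{n}|\lesssim e^{-\frac{1}{r}\varphi^{*}(rn)}$ for some $r>0$ (resp. every $r>0$). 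The proposition thus reduces to proving, for entire $F$, the equivalence of this coefficient decay with the growth bound $|F(z)|\lesssim e^{\lambda\omega(|z|)}$, where $\lambda$ and $r$ play reciprocal roles.

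For the passage from coefficients to growth I would first invoke Lemma \ref{l:HermiteCoeffSummable} to upgrade the supremum-type bound to the summability $\sum_{n}|a_{n}|e^{\frac{1}{r}\varphi^{*}(rn)}<\infty$ for the same $r$, and then estimate termwise
\[
|F(z)|\le\sum_{n}|a_{n}|\,|z|^{n}\le\Bigl(\sum_{n}|a_{n}|e^{\frac{1}{r}\varphi^{*}(rn)}\Bigr)\sup_{n}\bigl(e^{-\frac{1}{r}\varphi^{*}(rn)}|z|^{n}\bigr).
\]
The supremum is controlled by the duality: substituting $y=rn$ and using the biconjugation $(\varphi^{*})^{*}=\varphi$,
\[
\sup_{n}\bigl(n\log|z|-\tfrac{1}{r}\varphi^{*}(rn)\bigr)\le\tfrac{1}{r}\sup_{y\ge0}\bigl(y\log|z|-\varphi^{*}(y)\bigr)=\tfrac{1}{r}\varphi(\log|z|)=\tfrac{1}{r}\omega(|z|)
\]
for $|z|\ge1$, while boundedness of $F$ on the unit disc (together with $\omega\ge0$) supplies the estimate for $|z|<1$. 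This gives $|F(z)|\lesssim e^{\lambda\omega(|z|)}$ with $\lambda=1/r$; reading the quantifiers off Lemma \ref{l:HermiteCoeffSummable} yields ``for some $\lambda$'' in the $\H_{\omega}$ case and ``for every $\lambda$'' in the $\H_{0,\omega}$ case, since $\lambda=1/r$ runs over all of $(0,\infty)$.

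For the converse I would apply the Cauchy inequalities: from $|F(z)|\lesssim e^{\lambda\omega(|z|)}$ one obtains $|a_{n}|\le C\inf_{R>0}R^{-n}e^{\lambda\omega(R)}$, and the substitution $R=e^{t}$ turns the exponent into $-\sup_{t}\bigl(nt-\lambda\varphi(t)\bigr)=-\lambda\varphi^{*}(n/\lambda)$, so that $|a_{n}|\le Ce^{-\frac{1}{r}\varphi^{*}(rn)}$ with $r=1/\lambda$, i.e. $f\in\H_{\omega,r}(\R)$. Tracking again whether the growth bound is assumed for some or for every $\lambda$ delivers membership in $\H_{\omega}(\R)$ or $\H_{0,\omega}(\R)$ respectively.

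The only genuinely delicate point is the summability step in the first implication: the naive termwise estimate already exhibits the correct peak $e^{\frac{1}{r}\omega(|z|)}$ but leaves a divergent series, and it is precisely Lemma \ref{l:HermiteCoeffSummable} that converts the $\ell^{\infty}$ weight bound into an absolutely convergent $\ell^{1}$ sum \emph{without losing the weight}, thereby replacing what would otherwise be a saddle-point (Laplace) analysis of the sum. I would finally remark that the degenerate case $\omega\asymp\log^{+}$, in which $\varphi^{*}$ is infinite-valued, is subsumed under the conventions of Remark \ref{rk:infphi}: there $F$ is forced to be a polynomial (resp. a constant), in agreement with $\H_{0}(\R)$ (resp. $\C\phi$) and the corresponding polynomial (resp. sub-polynomial) growth of $e^{\lambda\omega(|z|)}$.
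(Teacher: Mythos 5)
Your proposal is correct and follows essentially the same route as the paper's own proof: identifying Taylor coefficients with normalized Hermite coefficients via \eqref{eq:BargmannHermite}, using Lemma \ref{l:HermiteCoeffSummable} together with the biconjugation $(\varphi^{*})^{*}=\varphi$ to pass from coefficient decay to the growth bound, applying the Cauchy inequalities with the substitution $R=e^{t}$ for the converse, and treating $\omega\asymp\log^{+}$ separately via Remark \ref{rk:infphi}. The only cosmetic slip is your claim that Lemma \ref{l:HermiteCoeffSummable} yields summability ``for the same $r$'' (in the $\H_{\omega}(\R)$ case it only gives summability for \emph{some} $r$), but this is immaterial since your argument only uses the some/every quantifier structure.
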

	
	\begin{proof} This is clearly true if $\omega=\log^{+}$, since $\B(\H_{0}(\mathbb{R}))$ is just the space of polynomials on $\C$. We may thus assume that $(\gamma)$ holds.
		Suppose first that the estimate from Lemma \ref{l:HermiteCoeffSummable} holds for a given $r>0$. Put $\lambda = 1 / r$. By \eqref{eq:BargmannHermite} we obtain, for $|z| > 1$,
			\begin{align*}|
				\B f(z)| 
				&= |\B\left[ \sum_{n \in \N} H(f, n) h_{n} \right](z)|  \\
				&\leq \sum_{n \in \N} \frac{|H(f, n)|}{\sqrt{n!}} |z|^{n}  
				\lesssim \sup_{n \in \N} \left( \exp[ n \log |z| - \frac{1}{r} \varphi^{*}(r n) ] \right) \\
				&\leq e^{\lambda \omega(|z|)} .
			\end{align*}
		Conversely, suppose that $F$ is an entire function such that $|F(z)| \lesssim e^{\lambda \omega(|z|)}$ for some $\lambda > 0$. By the Cauchy inequalities, we have, for $r = 1 / \lambda$ and for every $n \in \N$,
			\[ |F^{(n)}(0)| \lesssim n! \inf_{t > 1} \left( \sup_{|z| = t} \frac{|F(z)|}{t^{n}} \right) \lesssim n! \inf_{t > 1} \left( \exp[ - (n \log t - \lambda \omega(t))] \right) = n! e^{- \frac{1}{r} \varphi^{*}(r n)} . \]
		Putting $c_{n} = |F^{(n)}(0)| / \sqrt{n!}$, then it follows by \eqref{eq:BargmannHermite} that $f = \sum_{n \in \N} c_{n} h_{n}$ is an element of $\H_{\omega, r}(\R)$ and $F = \B f$.
	\end{proof}
	
Next, we consider the \emph{short-time Fourier transform} \cite{GrochenigBook} with window $\phi(x)= \pi^{-1/4} e^{-\frac{1}{2} x^{2}}$. 
For any $f \in L^{2}(\R)$ it is defined as
	\[ V_{\phi} f(x, \xi) = (2\pi )^{-\frac 12} \int_{-\infty}^{\infty} f(t) \phi(t - x) e^{- i t \xi}\, dt , \qquad (x, \xi) \in \R^{2} . \]		The connection between $V_{\phi} f$ and $\B f$ is given by
	\begin{equation}
		\label{eq:STFTBargmann}
		V_{\phi} f(x, \xi) = (2\pi )^{-\frac 12} e^{-\frac{1}{4} (x^{2} + \xi^{2})} e^{-\frac{i}{2} x \xi} \B f\left(\frac{\overline{z}}{\sqrt{2}}\right) , \qquad  z = x + i \xi \in \C .  	\end{equation}
Using this, we now find the following two results concerning the time-frequency decay of a function and its bounds for the Bargmann transform.

	\begin{lemma}
		\label{l:TFDecayToBargmann}
		Let $\omega$ be a weight function satisfying $(\alpha)$ such that $\omega(t) = o(t^{2})$ (such that $\omega(t) = O(t^{2})$). If $f \in \H_{0}^{\prime}(\R)$ satisfies
			\[ |f(x)| \lesssim e^{-\frac{1}{2} x^{2} + \lambda \omega(|x|)} , \qquad |\widehat{f}(\xi)| \lesssim e^{-\frac{1}{2} \xi^{2} + \lambda \omega(|\xi|)} , \]
		for some $\lambda > 0$ (for small enough $\lambda > 0$), then,
			\[ |\B f(z)| \lesssim e^{\frac{1}{2} x^{2} + 2 L^{2} \lambda \omega(|\xi |)} ,
			\qquad |\B f(z)| \lesssim e^{\frac{1}{2} \xi ^{2} + 2 L^{2} \lambda \omega(|x|)},
			\qquad   z = x + i\xi  \in \C .
			 \]
	\end{lemma}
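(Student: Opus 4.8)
The plan is to push everything through the short-time Fourier transform by means of the identity \eqref{eq:STFTBargmann} and to estimate $V_{\phi}f$ directly from the hypotheses. Solving \eqref{eq:STFTBargmann} for $\B f$ and picking the point $z'=\sqrt{2}(x-i\xi)$, so that $\overline{z'}/\sqrt{2}=x+i\xi$, one finds $|\B f(x+i\xi)| = (2\pi)^{1/2} e^{\frac{1}{2}(x^{2}+\xi^{2})} |V_{\phi} f(\sqrt{2}x,-\sqrt{2}\xi)|$, so it suffices to bound the STFT. Since $|\phi(t-a)|=\pi^{-1/4}e^{-\frac{1}{2}(t-a)^{2}}$, the definition of $V_{\phi}$, the first decay hypothesis on $f$, and the completion of the square $-\frac{1}{2}t^{2}-\frac{1}{2}(t-a)^{2}=-(t-\tfrac{a}{2})^{2}-\frac{1}{4}a^{2}$ give
\[ |V_{\phi} f(a,b)| \lesssim e^{-a^{2}/4}\int_{-\infty}^{\infty} e^{-(t-a/2)^{2}+\lambda\omega(|t|)}\, dt , \]
together with the analogous bound for $V_{\phi}\widehat{f}$ coming from the decay hypothesis on $\widehat{f}$.

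Next I would estimate the auxiliary integral $I(c):=\int_{-\infty}^{\infty} e^{-(t-c)^{2}+\lambda\omega(|t|)}\, dt$. Writing $|t|\leq |t-c|+|c|$ and invoking $(\alpha)$ together with the monotonicity of $\omega$ yields $\omega(|t|)\leq L\,\omega(|t-c|)+L\,\omega(|c|)+L$; the substitution $u=t-c$ then gives $I(c)\lesssim e^{L\lambda\omega(|c|)}\int_{-\infty}^{\infty} e^{-u^{2}+L\lambda\omega(|u|)}\, du$. The residual integral is finite: when $\omega(t)=o(t^{2})$ it converges for every $\lambda>0$, whereas when only $\omega(t)=O(t^{2})$ it converges provided $L\lambda$ is small enough, which is exactly the origin of the parenthetical restriction ``for small enough $\lambda$''. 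Hence $I(c)\lesssim e^{L\lambda\omega(|c|)}$, and since $L\geq 1$ and $\omega$ is non-decreasing this is in turn $\leq e^{2L^{2}\lambda\omega(|c'|)}$ for any $|c'|\geq|c|$.

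Finally I would assemble the two inequalities. Applying the bound for $|V_{\phi}f|$ with $a=\sqrt{2}x$ (so that $a/2=x/\sqrt{2}$ and $e^{-a^{2}/4}=e^{-x^{2}/2}$) and inserting it into the displayed formula for $\B f$ produces $|\B f(x+i\xi)|\lesssim e^{\frac{1}{2}\xi^{2}}\,I(x/\sqrt{2})\lesssim e^{\frac{1}{2}\xi^{2}+2L^{2}\lambda\omega(|x|)}$, since $|x|/\sqrt{2}\leq|x|$; this is the second asserted estimate. For the first estimate I would use the covariance of the STFT under the Fourier transform: because $\widehat{\phi}=\phi$, a short Parseval computation gives $|V_{\phi}f(x,\xi)|=|V_{\phi}\widehat{f}(\xi,-x)|$, whence $|V_{\phi}f(\sqrt{2}x,-\sqrt{2}\xi)|=|V_{\phi}\widehat{f}(-\sqrt{2}\xi,-\sqrt{2}x)|$; feeding the decay of $\widehat{f}$ through the $V_{\phi}\widehat{f}$ bound with first argument $-\sqrt{2}\xi$ yields $e^{-\xi^{2}/2}\,I(-\xi/\sqrt{2})$, and multiplying by $e^{\frac{1}{2}(x^{2}+\xi^{2})}$ gives $|\B f(x+i\xi)|\lesssim e^{\frac{1}{2}x^{2}+2L^{2}\lambda\omega(|\xi|)}$.

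The only genuinely delicate point is the convergence of the residual Gaussian integral $\int_{-\infty}^{\infty} e^{-u^{2}+L\lambda\omega(|u|)}\, du$: this is where the dichotomy $\omega(t)=o(t^{2})$ versus $\omega(t)=O(t^{2})$ is actually used, and where one must check that shifting the center of the weight through $(\alpha)$ does not destroy integrability. Everything else is bookkeeping of Gaussian exponents and the elementary changes of variable dictated by \eqref{eq:STFTBargmann}.
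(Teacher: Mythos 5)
Your proof is correct and follows essentially the same route as the paper: both bound $V_{\phi}f$ via the Gaussian completion of the square, use $(\alpha)$ to transfer the weight (with the residual Gaussian integral converging precisely under the $o(t^{2})$/$O(t^{2})$-with-small-$\lambda$ dichotomy), pass to $\B f$ through \eqref{eq:STFTBargmann}, and obtain the other estimate from the identity $V_{\phi}f(x,\xi)=e^{-ix\xi}V_{\phi}\widehat{f}(\xi,-x)$. The only differences are bookkeeping choices (where you center the square and at which scaled points you evaluate the STFT), and your constant $2L^{2}\lambda$ matches the paper's.
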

	
	\begin{proof}
		We have
			\begin{align*}
				|V_{\phi} f(x, \xi)| e^{\frac{1}{4} x^{2} - L \lambda \omega(|x|)}
				&\lesssim \int_{-\infty}^{\infty} |f(t)|
				e^{-\frac{1}{2} |t - x|^{2} + \frac{1}{4} x^{2}}
				e^{- L \lambda \omega(|x|)}\, dt
				 \\
				&= \int_{-\infty}^{\infty} (|f(t)| e^{\frac{1}{2} t^{2}})
				e^{-\frac{1}{4} |2t - x|^{2} - L \lambda \omega(|x|)}\,  dt
				\\
				&\lesssim \int_{-\infty}^{\infty} (|f(t)|
				e^{\frac{1}{2} t^{2} - \lambda \omega(|2t|)})
				e^{L \lambda \omega(|2t - x|) - \frac{1}{4} |2t - x|^{2}}\, dt
				\lesssim 1 ,
				\end{align*}
		where in the case $\omega(t)=O(t^2)$ one needs to choose $\lambda$ small enough so that the function $t \mapsto e^{L \lambda \omega(|2t - x|) - \frac{1}{4} |2t - x|^{2}}$ is integrable.
		Using \eqref{eq:STFTBargmann} we then find that $|\B f(z)| \lesssim e^{\frac{1}{2} \xi ^{2} + 2 L^2 \lambda \omega(|x|)}$. By observing that $V_{\phi} f(x, \xi) = e^{- i x \xi} V_{\phi} \widehat{f}(\xi, -x)$,  
		the other bound follows analogously.
	\end{proof}
	
		\begin{lemma}
		\label{l:BargmannToTFDecay}
		Let $\omega$ be a weight function satisfying $(\alpha)$ such that $\omega(t) = o(t^{2})$ (such that $\omega(t) = O(t^{2})$). For any entire function $F$ satisfying, for some  $\lambda > 0$ (small enough $\lambda >0$) ,
			\[ |F(z)| \lesssim  e^{\lambda \omega(|z|)}, \]
		there exists an $f \in \H_{0}^{\prime}(\R)$ such that
			\[ |f(x)| \lesssim e^{-\frac{1}{2} x^{2} + 2 L^{2} \lambda \omega(|x|)} , \qquad |\widehat{f}(\xi)| \lesssim e^{-\frac{1}{2} \xi^{2} + 2 L^{2} \lambda \omega(|\xi|)} , \]
		and $F = \B f$.
	\end{lemma}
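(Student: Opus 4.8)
The plan is to invert the Bargmann transform through the short-time Fourier transform and to exploit the analyticity of $F$ in order to recover the sharp Gaussian decay. Since $\omega$ satisfies $(\delta)$, Proposition \ref{p:PilipovicBargmanChar} already furnishes an $f \in \H_{\omega}(\R)$ (resp.\ $f \in \H_{0,\omega}(\R)$) with $\B f = F$; as $\omega(t)=o(t^{2})$ (resp.\ since in the $O(t^{2})$ case the Hermite coefficients of $f$ are square-summable), such an $f$ lies in $L^{2}(\R)$, so that $V_{\phi}f$ and the reconstruction formula $f(t) = c_{0}\iint_{\R^{2}} V_{\phi}f(x,\xi)\, e^{i t\xi}\phi(t-x)\, dx\, d\xi$ are at our disposal. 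Inserting \eqref{eq:STFTBargmann} then expresses $f(t)$ as a double Gaussian integral against $F\bigl((x-i\xi)/\sqrt{2}\bigr)$.

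The crucial point is that estimating this integral by the triangle inequality together with $|F|\lesssim e^{\lambda\omega}$ is too lossy: completing the squares naively only yields $|f(t)|\lesssim e^{-t^{2}/6+\cdots}$, because it discards the oscillation $e^{i\xi(t-x/2)}$. To recover the full factor $e^{-t^{2}/2}$, I would, for each fixed $x$ and $t$, read the inner $\xi$-integral as a contour integral of the entire integrand and shift the contour $\xi\mapsto \xi + i(2t-x)$. This is precisely the displacement that balances the linear term against the Gaussian $e^{-\xi^{2}/4}$, converting the oscillatory gain into a genuine factor $e^{-\frac14(2t-x)^{2}}$ while relocating the argument of $F$ so that its real part becomes $\sqrt{2}\,t$, independent of $x$. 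The shift is legitimate because, $\omega$ being subquadratic (and $\lambda$ small in the $O(t^{2})$ case), the integrand decays like $e^{-\frac14(\re\xi)^{2}+o(\xi^{2})}$ across the relevant strip, so Cauchy's theorem applies.

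After the shift I bound $\bigl|F(\sqrt{2}\,t - i\xi/\sqrt{2})\bigr|\lesssim e^{\lambda\omega(\sqrt{2t^{2}+\xi^{2}/2})}$ and split the weight by two successive applications of $(\alpha)$, pulling out the factor $e^{2L^{2}\lambda\omega(|t|)}$ and leaving behind a convergent $\xi$-integral. The remaining Gaussian integrations in $\xi$ and then in $x$ are carried out exactly: after completing the square the total exponent assembles into $-(x-t)^{2}-\tfrac12 t^{2}$, so the $x$-integral contributes precisely $e^{-t^{2}/2}$. This produces $|f(t)|\lesssim e^{-\frac12 t^{2}+2L^{2}\lambda\omega(|t|)}$, as claimed.

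Finally, the estimate for $\widehat{f}$ follows from the same argument applied to $\widehat{f}$, invoking the symmetry $V_{\phi}f(x,\xi)=e^{-ix\xi}V_{\phi}\widehat{f}(\xi,-x)$ exactly as in the proof of Lemma \ref{l:TFDecayToBargmann}. The only genuine difficulty is the second step: the sharp decay is invisible to absolute-value estimates and can be extracted only by moving the contour, that is, by using that $F$ is entire and not merely that $|F|$ is controlled.
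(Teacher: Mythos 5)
Your argument is correct, but it takes a genuinely different route from the paper's, and your closing assertion --- that the sharp decay ``can be extracted \emph{only} by moving the contour'' --- is refuted by the paper's own proof. You recover $e^{-t^{2}/2}$ by writing $f$ through the full two-dimensional reconstruction formula and shifting the $\xi$-contour by $i(2t-x)$; the computation does close: the shift produces $e^{-\frac{1}{4}(2t-x)^{2}}$, the argument of $F$ acquires real part $\sqrt{2}\,t$, the exponents assemble to $-(x-t)^{2}-\frac{1}{2}t^{2}$, and two applications of $(\alpha)$ give the stated constant $2L^{2}\lambda$. The paper, by contrast, never forms the double integral: for a \emph{fixed} window position $x$ it applies one-variable Fourier inversion to $\xi\mapsto V_{\phi}f(x,\xi)$, so that $f(t)\phi(t-x)=(2\pi)^{-1/2}\int_{-\infty}^{\infty}V_{\phi}f(x,\xi)e^{it\xi}\,d\xi$, and then evaluates at the single point $t=x/2$. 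Since \eqref{eq:STFTBargmann} and $(\alpha)$ give $\int_{-\infty}^{\infty}|V_{\phi}f(x,\xi)|\,d\xi\lesssim e^{-\frac{1}{4}x^{2}+L\lambda\omega(|x|)}$, while $\phi(x/2-x)=\pi^{-1/4}e^{-x^{2}/8}$, this yields $|f(x/2)|\lesssim e^{-\frac{1}{8}x^{2}+L\lambda\omega(|x|)}$, which is exactly the claimed bound after substituting $t=x/2$. So the $e^{-t^{2}/6}$ loss you diagnose is an artifact of integrating over \emph{all} window positions before estimating, not of taking absolute values: choosing the optimal single position $x=2t$ recovers the sharp Gaussian with no contour deformation and no use of the analyticity of $F$ beyond the existence of $f$. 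As for what each approach buys: yours makes the mechanism (oscillation converted into decay) explicit and would survive in settings where no pointwise partial inversion is convenient; the paper's is shorter, purely absolute-value based, and independent of Proposition \ref{p:PilipovicBargmanChar}. That last point touches the one soft spot in your write-up: your existence step invokes Proposition \ref{p:PilipovicBargmanChar}, which requires $(\delta)$ --- a hypothesis not listed in the lemma itself, only supplied by the section-wide convention --- whereas the paper obtains $f\in L^{2}(\R)$ with $\B f=F$ from Bargmann's classical theorem, using only that $|F(z)|\lesssim e^{c'|z|^{2}}$ for some $c'<1/2$; this is also precisely where ``small enough $\lambda$'' first enters in the $O(t^{2})$ case, and the same smallness must be tracked both in your square-summability claim and in the justification of the contour shift. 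These are routine matters, but they should be written out if you keep your route.
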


	\begin{proof}
		As $F$ is an entire function with $|F(z)| \lesssim e^{c' |z|^{2}}$ for some $c' < 1/2$, there exists \cite{B-HilbertSpAnalFuncAssocIntTrans} $f \in L^{2}(\R)$ such that $F = \B f$. By the Fourier inversion formula, we have
			\[ \pi^{-\frac{1}{4}} e^{- \frac{1}{8} x^{2}} f(x / 2) =  f(x / 2) \phi(x / 2 - x) = \int_{-\infty}^{\infty} V_{\phi} f(x, \xi) e^{\frac{i}{2} x \xi} d\xi . \] 
		Via \eqref{eq:STFTBargmann} we get
			\[ \int_{-\infty}^{\infty} |V_{\phi} f(x, \xi)| d\xi \lesssim e^{-\frac{1}{4} x^{2} + L \lambda \omega(|x|)} \int_{-\infty}^{\infty} e^{L \lambda \omega(|\xi|) - \frac{1}{4} \xi^{2}} d\xi \lesssim e^{-\frac{1}{4} x^{2} + L \lambda \omega(|x|)} , \]
		where in the case $\omega(t)=O(t^2)$ one needs to choose $\lambda$ small enough such that the function $\xi \mapsto e^{L \lambda \omega(|\xi|) - \frac{1}{4} \xi^{2}}$ is integrable.	Consequently, $|f(x)| \lesssim e^{-\frac{1}{2} x^{2} + 2 L^{2} \lambda \omega(|x|)}$. 
		The other bound follows similarly by observing again that $V_{\phi} f(x, \xi) = e^{- i x \xi} V_{\phi} \widehat{f}(\xi, -x)$. 
	\end{proof}

We are ready to show our main results.

	\begin{proof}[Proof of Theorem \ref{t:TFCharH01/2}]
		Note that \eqref{eq:HermiteCoeffH01/2} exactly means  that $f$ is an element of $\H_{0, 1/2}(\R)$. By Proposition \ref{p:PilipovicBargmanChar} and Lemma \ref{l:BargmannToTFDecay} it follows that each element $f \in \H_{0, 1/2}(\R)$ satisfies \eqref{eq:H01/2TFDecay}.
		Suppose now that $f \in L^2(\R)$ is such that \eqref{eq:H01/2TFDecay} holds. By Lemma \ref{l:TFDecayToBargmann} we have
			\[ |\B f(z)| \lesssim e^{\frac{1}{2} x^{2} + \lambda y^{2}} , \qquad |\B f(z)| \lesssim e^{\frac{1}{2} y^{2} +  \lambda x^{2}} , \qquad \forall \lambda > 0 . \]
		We will show that $|\B f(z)| \lesssim e^{\lambda |z|^{2}}$ for all $\lambda > 0$ in $S_{\pi / 4, \pi / 2}$, the same bounds on the other quadrants will analogously follow. For any $\varepsilon \in (0, 1)$ let $\theta = \arctan (1/(4 \varepsilon))$. Put $G_{\varepsilon}(z) = \B f(z) e^{-\varepsilon (z e^{-i \frac{\pi}{4}})^{2}}$. Then, for every $\lambda > 0$, $|G_{\varepsilon}(x)| \lesssim e^{\lambda x^{2}}$ for $x \geq 0$, while for $z = \rho e^{i \theta}$, $\rho > 0$, we have
			\[ |G_{\varepsilon}(z)| = |\B f(z)| e^{-\varepsilon \cos\left(2 \theta - \frac{\pi}{2}\right) |z|^{2}} = |\B f(z)| e^{-2 \varepsilon \tan \theta \cdot x^{2}} \lesssim e^{\lambda y^{2}} .  \]
		By Theorem \ref{t:PhragmenLindelofChar} we get that $|G_{\varepsilon}(z)| \lesssim e^{\lambda |z|^{2}}$ for all $\lambda > 0$, so that in particular $|\B f(z)| \lesssim e^{2\varepsilon |z|^{2}}$, for all $z \in \C$ with $0 \leq \arg z \leq \theta$. Moreover, for any $\varrho \in (\theta, \pi / 2]$ we have
			\[ |\B f(\rho e^{i \varrho})| \lesssim e^{\frac{1}{2} \cos^{2} \theta \cdot \rho^{2} + \varepsilon \rho^{2}} = e^{8 \varepsilon^{2} \sin^{2} \theta \cdot \rho^{2} + \varepsilon \rho^{2}}\leq  e^{9 \varepsilon \rho^{2}} . \]
		Consequently, we have  $|\B f(z)| \lesssim e^{\lambda |z|^{2}}$ for every $\lambda > 0$ in $S_{\pi / 4, \pi / 2}$. Our proof is then completed by Proposition \ref{p:PilipovicBargmanChar}.
	\end{proof}

	\begin{proof}[Proof of Theorem \ref{t:FourierCharProperPilipovic}]
		By definition we have that \eqref{eq:HermiteCharProperPilipovic} holds if and only if $f \in \H_{\omega}(\R)$ ($f \in \H_{0, \omega}(\R)$).
		If the latter is true, then \eqref{eq:FourierCharProperPilipovic} follows from Proposition \ref{p:PilipovicBargmanChar} and Lemma \ref{l:BargmannToTFDecay}.
		Conversely, if $f$ satisfies \eqref{eq:FourierCharProperPilipovic}, then by Lemma \ref{l:TFDecayToBargmann} we have that $|\B f(x)| \lesssim e^{\lambda \omega(|x|)}$ and $|\B f(i y)| \lesssim e^{\lambda \omega(|y|)}$ for every $x, y \in \R$ and some $\lambda > 0$ (all $\lambda > 0$).
		Also, since $\omega(t) = o(t^{2})$ by Lemma \ref{l:betasigma=>o(t^sigma)}, we infer from Theorem \ref{t:TFCharH01/2} and Proposition \ref{p:PilipovicBargmanChar} that $|\B f(z)| \lesssim e^{\varepsilon |z|^{2}}$ for every $\varepsilon > 0$.
		Theorem \ref{t:PhragmenLindelofChar} then implies that $|\B f(z)| \lesssim e^{\lambda \omega(|z|)}$ for some $\lambda > 0$ (for every $\lambda > 0$). Consequently, $f \in \H_{\omega}(\R)$ ($f \in \H_{0, \omega}(\R)$) as another application of Proposition \ref{p:PilipovicBargmanChar} yields.  
	\end{proof}	
	
\section{The multidimensional case}
\label{sec:MultDim}

In this final section, we review multidimensional analogs of our main results.
Their proofs are similar to the one-dimensional case and are therefore omitted.
Throughout this section, we fix $d \geq 1$ and it denotes the dimension.
 
We start by introducing the multidimensional counterparts of the spaces introduced in Section \ref{sec:FourierCharPilipovicSp}.
Let
	\[ h_{\alpha}(x) = (-1)^{|\alpha|} \pi^{-\frac{d}{4}} (2^{|\alpha|} \alpha!)^{-1/2} e^{\frac{1}{2} |x|^{2}} \frac{\partial^{\alpha}}{\partial x^{\alpha}} [e^{-|x|^{2}}] , \qquad \alpha \in \N^{d} , \]
denote the $d$-dimensional Hermite functions. 
Put $\H_{0}(\R^{d}) = \spn \{ h_{\alpha} \mid \alpha \in \N^{d} \}$ and let $\H^{\prime}_{0}(\R^{d})$ be its dual space.
For any $f \in \H^{\prime}_{0}(\R^{d})$ we write $H(f, \alpha) = \ev{f}{h_{\alpha}}$, $\alpha \in \N^{d}$, so that $f = \sum_{\alpha \in \N^{d}} H(f, \alpha) h_{\alpha}$.
Then, for a weight function $\omega$ satisfying $(\delta)$ and $r > 0$ we consider the Banach space
	\[ \H_{\omega, r}(\R^{d}) = \{ f \in \H^{\prime}_{0}(\R^{d}) \mid \|f\|_{\H_{\omega, r}} < \infty \}  , \quad \|f\|_{\H_{\omega, r}} := \sup_{\alpha \in \N^{d}} \left( \frac{|H(f, \alpha)|}{\sqrt{\alpha!}} e^{\frac{1}{r} \varphi^{*}(r |\alpha|)} \right) .   \]
Finally, we define our two spaces of interest:
	\[ \H_{\omega}(\R^{d}) = \bigcup_{r > 0} \H_{\omega, r}(\R^{d}) , \qquad \H_{0, \omega}(\R^{d}) = \bigcap_{r > 0} \H_{\omega, r}(\R^{d}) . \]
	
As before, our aim is to provide Fourier characterizations of $\H_{\omega}(\R^{d})$ and $\H_{0, \omega}(\R^{d})$ for suitable weight functions $\omega$.
However, the techniques used for the one-dimensional case appear not to
 translate directly to multiple dimensions. 
More specifically, for our characterization, it will not suffice only considering the decay of the function and its Fourier transform.
Instead, we also look at tensor products of partial Fourier transforms:
	\[ \F_{\varpi} = \bigotimes_{j = 1}^{d} \F^{\varpi_{j}} , \qquad \varpi \in \{0, 1\}^{d} , \]
where $\F$ denotes the one-dimensional Fourier transform.
Note that $\F_{0}$ is simply the identity while $\F_{\mathbf{1}} = \F_{(1, \ldots, 1)}$ equals the $d$-dimensional Fourier transform.
Every $\F_{\varpi}$ acts as an isomorphism on $\H_{0}(\R^{d})$, so by transposition also on $\H^{\prime}_{0}(\R^{d})$.
Moreover, by \cite[Proposition 7.1]{T-ImagFuncDistSpBargmannTrans}, any $f \in \H^{\prime}_{0}(\R^{d})$ belongs to $\S_{1/2}(\R^{d})$ if and only if
	\[ |\F_{\varpi}\{f\}(\xi)| \lesssim e^{- \eta |\xi|^{2}} , \qquad \varpi \in \{ 0, 1 \}^{d} , \]
for some $0 \leq \eta \leq 1/2$. Here $\S_{1/2}(\R^d)$ denotes the the smallest $d$-dimensional Fourier invariant classical Gelfand-Shilov space \cite{Gelfand-ShilovII}.

We are now in a position to state the multidimensional analogs of our main Theorems \ref{t:TFCharH01/2} and \ref{t:FourierCharProperPilipovic}.
Before doing so, let us briefly comment on their proofs.
Consider for any $\varpi \in \{0, 1\}^{d}$ the transform 
	\[ U_{\varpi}(x, \xi) =  ((\mathbf{1} - \varpi) \odot x + \varpi \odot \xi, - \varpi \odot x + (\mathbf{1} - \varpi) \odot \xi) , \qquad x, \xi \in \R^{d} , \]
where $\odot$ denotes the Hadamard product of two vectors.
Then one verifies that for the \emph{$d$-dimensional short-time Fourier transform} \cite{GrochenigBook}
 	\[ V_{\phi} f(x, \xi) = (2 \pi)^{-d/2} \int_{\R^{d}} f(t) \phi(t - x) e^{-i \ev{t}{\xi}} dt \] 
with $\phi(x) = h_{0}(x) = \pi^{-d/4} e^{-\frac{1}{2} |x|^{2}}$, it holds that
	\[ V_{\phi} [\F_{\varpi} f](x, \xi) = e^{- i \ev{\varpi}{x \odot \xi}} V_{\phi} f (U_{\varpi}(x, \xi)) . \]
Using the multidimensional analog of \eqref{eq:STFTBargmann} (see \cite[(B.2)]{T-G-FourierCharPilipovicSp}), connecting the $d$-dimensional Bargmann transform and the short-time Fourier transform, one finds similar results to
 Lemmas \ref{l:TFDecayToBargmann} and \ref{l:BargmannToTFDecay}, linking the decay of the $\F_{\varpi}\{f\}$ and the bounds on the Bargmann transform (see also \cite[Section 2]{T-G-FourierCharPilipovicSp}).
By proving a multidimensional variant of Theorem \ref{t:PhragmenLindelofChar} (using recursively the one-dimensional case, see \cite[Proposition A.2]{T-G-FourierCharPilipovicSp}), one may then obtain the ensuing theorems as before.
We remark that this analysis is enabled by the condition $(\alpha)$ on $\omega$ and the convexity of $\varphi^{*}$.

	\begin{theorem}
		A $f \in \H^{\prime}_{0}(\R^{d})$ satisfies
			\[ |\F_{\varpi}\{f\}(\xi)| \lesssim e^{-(\frac{1}{2} - \lambda) |\xi|^{2}} , \qquad \varpi \in \{0, 1\}^{d}, ~ \xi \in \R^{d} , \]
		for every $\lambda > 0$ if and only if $f \in \H_{0, 1/2}(\R^{d})$.
	\end{theorem}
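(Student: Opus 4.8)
The plan is to transfer everything to the $d$-dimensional Bargmann transform $\B$ and then argue exactly as in the proof of Theorem \ref{t:TFCharH01/2}, with the single quadrant argument replaced by a recursive, coordinatewise one. The starting point is the multidimensional analogue of Proposition \ref{p:PilipovicBargmanChar} with $\omega(t)=t^{2}$ (so that $\tfrac1r\varphi^{*}(rn)$ produces the weight $e^{-rn}$, cf.\ Example \ref{ex:ClassicalPilipovicSp}): an entire $F$ equals $\B f$ for some $f\in\H_{0,1/2}(\R^{d})$ if and only if $|F(z)|\lesssim e^{\lambda|z|^{2}}$ for every $\lambda>0$. Thus it suffices to show that the stated decay of all $\F_{\varpi}\{f\}$ is equivalent to $|\B f(z)|\lesssim e^{\lambda|z|^{2}}$ for every $\lambda>0$. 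Throughout I would use the intertwining identity $V_{\phi}[\F_{\varpi}f](x,\xi)=e^{-i\ev{\varpi}{x\odot\xi}}V_{\phi}f(U_{\varpi}(x,\xi))$ together with the $d$-dimensional version of \eqref{eq:STFTBargmann}.

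For the direction $f\in\H_{0,1/2}(\R^{d})\Rightarrow$ decay, I would first note that $\F_{\varpi}h_{\alpha}=(-i)^{\ev{\varpi}{\alpha}}h_{\alpha}$, so each $\F_{\varpi}$ only multiplies the Hermite coefficients $H(f,\alpha)$ by unimodular constants; hence $\F_{\varpi}$ is an isometric automorphism of every $\H_{\omega,r}(\R^{d})$ and in particular preserves $\H_{0,1/2}(\R^{d})$. Each $\F_{\varpi}f$ therefore lies again in $\H_{0,1/2}(\R^{d})$, and applying the $d$-dimensional analogue of Lemma \ref{l:BargmannToTFDecay} (valid for $\omega(t)=t^{2}=O(t^{2})$ and small $\lambda$, which suffices since only arbitrarily small $\lambda$ is needed) yields $|\F_{\varpi}f(\xi)|\lesssim e^{-\frac12|\xi|^{2}+C\lambda|\xi|^{2}}$; as $\lambda>0$ is arbitrary this is exactly the claimed estimate for every $\lambda$.

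For the converse, the hypotheses furnish, for every $\varpi$, the bound with a fixed $\eta<1/2$, so $f\in\S_{1/2}(\R^{d})$ by \cite[Proposition 7.1]{T-ImagFuncDistSpBargmannTrans} and $\B f$ is entire with $|\B f(z)|\lesssim e^{c'|z|^{2}}$ for some $c'<1/2$. Feeding the decay of each $\F_{\varpi}\{f\}$ through the $U_{\varpi}$-identity and the $d$-dimensional analogue of Lemma \ref{l:TFDecayToBargmann}, I would obtain a family of $2^{d}$ bounds of the form
\[
|\B f(z)|\lesssim \exp\Big(\tfrac12\sum_{j} g_{j}^{2}+\lambda\sum_{j} c_{j}^{2}\Big),\qquad \forall\lambda>0,
\]
where for each $j$ the pair $\{g_{j},c_{j}\}$ equals $\{x_{j},\xi_{j}\}$ (with $z=x+i\xi$), and where, as $\varpi$ ranges over $\{0,1\}^{d}$, every one of the $2^{d}$ ways of selecting the ``large'' coordinate $g_{j}\in\{x_{j},\xi_{j}\}$ in each slot $j$ is realized. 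I would then invoke the multidimensional Phragm\'{e}n-Lindel\"{o}f principle obtained by iterating Theorem \ref{t:PhragmenLindelofChar} with $\sigma=2$ one complex variable at a time (as in \cite[Proposition A.2]{T-G-FourierCharPilipovicSp}) to interpolate these estimates down to $|\B f(z)|\lesssim e^{\lambda|z|^{2}}$ for every $\lambda>0$, and conclude $f\in\H_{0,1/2}(\R^{d})$ by the Proposition above.

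The main obstacle is this last interpolation step. Already in one variable (in the proof of Theorem \ref{t:TFCharH01/2}) one cannot apply Phragm\'{e}n-Lindel\"{o}f directly to the two bounds $e^{\frac12 x^{2}+\lambda y^{2}}$ and $e^{\frac12 y^{2}+\lambda x^{2}}$, since each has full order $e^{\frac12|z|^{2}}$ on a boundary ray; the remedy is the rotation device $G_{\varepsilon}(z)=\B f(z)e^{-\varepsilon(ze^{-i\pi/4})^{2}}$, which first establishes a sub-$\varepsilon|z|^{2}$ bound on a thin sector and then propagates it across the quadrant. In $d$ variables this device must be run recursively: freeze all but one variable, reduce the exponent in that variable from $\tfrac12(\cdot)^{2}$ to $\varepsilon(\cdot)^{2}$ on the corresponding polysector faces by combining the two relevant members of the $2^{d}$-family, and iterate over $j=1,\dots,d$ while keeping the implied constants uniform. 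Correctly organizing this recursion --- in particular, ensuring that the boundary estimate required at step $j$ is supplied by the estimate already gained at step $j-1$, and that condition $(\alpha)$ keeps the accumulated constants under control --- is the delicate bookkeeping concealed by the phrase ``similar to the one-dimensional case''.
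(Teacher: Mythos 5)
Your proposal follows essentially the same route the paper itself sketches in Section \ref{sec:MultDim} (the detailed proof is omitted there): reduce everything to the growth bound $|\B f(z)|\lesssim e^{\lambda|z|^{2}}$ via the multidimensional analogue of Proposition \ref{p:PilipovicBargmanChar}, transfer the decay of the $\F_{\varpi}\{f\}$ through the $U_{\varpi}$-intertwining identity and the analogues of Lemmas \ref{l:TFDecayToBargmann} and \ref{l:BargmannToTFDecay}, and conclude with the multidimensional Phragm\'{e}n-Lindel\"{o}f principle obtained by iterating Theorem \ref{t:PhragmenLindelofChar} coordinatewise as in \cite{T-G-FourierCharPilipovicSp}. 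Your forward direction via the eigenvalue identity $\F_{\varpi}h_{\alpha}=(-i)^{\ev{\varpi}{\alpha}}h_{\alpha}$ is a correct and clean shortcut, and your identification of the recursive interpolation as the genuinely delicate step is precisely what the paper hides behind ``similar to the one-dimensional case.''
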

	
	\begin{theorem}
		Let $\omega$ be a weight function satisfying $(\beta_{2}^{*})$ and $(\delta)$. Then, $f \in \H^{\prime}_{0}(\R^{d})$ satisfies
			\[ |\F_{\varpi}\{f\}(\xi)| \lesssim e^{-\frac{1}{2}  |\xi|^{2} + \lambda \omega(|\xi|)}, \qquad \varpi \in \{0, 1\}^{d}, ~ \xi \in \R^{d} , \]
		for some $\lambda > 0$ (for every $\lambda > 0$) if and only if $f \in \H_{\omega}(\R^{d})$ ($f \in \H_{0, \omega}(\R^{d})$).
	\end{theorem}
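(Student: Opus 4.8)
The plan is to transfer the one-dimensional scheme essentially verbatim, replacing $\B$, $V_{\phi}$, and the scalar Phragm\'{e}n--Lindel\"{o}f principle by their $d$-dimensional counterparts and accounting for the extra directions encoded by the partial Fourier transforms $\F_{\varpi}$. The first step is the multidimensional analog of Proposition \ref{p:PilipovicBargmanChar}: for an entire function $F$ on $\C^{d}$ one has $F = \B f$ with $f \in \H_{\omega}(\R^{d})$ (resp. $\H_{0, \omega}(\R^{d})$) if and only if $|F(z)| \lesssim e^{\lambda \omega(|z|)}$ for some (resp. every) $\lambda > 0$. I would prove this exactly as in the scalar case: the Cauchy inequalities applied to $F(z) = \sum_{\alpha} \frac{H(f, \alpha)}{\sqrt{\alpha!}} z^{\alpha}$ reduce to the one-variable estimate after noting $|z^{\alpha}| \leq |z|^{|\alpha|}$, so that only the total order $|\alpha|$ enters the weight $\varphi^{*}(r |\alpha|)$.

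Next I would establish the multidimensional versions of Lemmas \ref{l:TFDecayToBargmann} and \ref{l:BargmannToTFDecay}. The identity
	\[ V_{\phi}[\F_{\varpi} f](x, \xi) = e^{- i \ev{\varpi}{x \odot \xi}} V_{\phi} f(U_{\varpi}(x, \xi)) \]
shows that the decay of $\F_{\varpi}\{f\}$ controls $V_{\phi} f$ on the image of $U_{\varpi}$, and via the $d$-dimensional form of \eqref{eq:STFTBargmann} this translates into a bound on $\B f$ along the coordinate subspace in which the $j$-th variable is real when $\varpi_{j} = 0$ and purely imaginary when $\varpi_{j} = 1$. Ranging over all $\varpi \in \{0, 1\}^{d}$ thus produces bounds for $\B f$ on the $2^{d}$ distinguished edges of $\C^{d}$. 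As in the scalar proofs, condition $(\alpha)$ is used to absorb the Gaussian cross terms, and in the borderline case $\omega(t) = O(t^{2})$ one must take $\lambda$ small enough to keep the relevant Gaussian integrals finite.

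The two directions then run as follows. For the sufficiency of the Hermite bounds, the Bargmann characterization gives $|\B f(z)| \lesssim e^{\lambda \omega(|z|)}$, and the multidimensional Lemma \ref{l:BargmannToTFDecay} yields the desired decay of every $\F_{\varpi}\{f\}$. For the converse, the decay hypotheses feed through the multidimensional Lemma \ref{l:TFDecayToBargmann} to give the edge bounds $|\B f(z)| \lesssim e^{\lambda \omega(|z_{j}|)}$, while the multidimensional analog of Theorem \ref{t:TFCharH01/2} supplies the a priori growth $|\B f(z)| \lesssim e^{\varepsilon |z|^{2}}$ for every $\varepsilon > 0$. I would then fill in the interior by an iterated Phragm\'{e}n--Lindel\"{o}f argument (compare \cite[Proposition A.2]{T-G-FourierCharPilipovicSp}): freezing all but one complex variable on suitable rays and applying Theorem \ref{t:PhragmenLindelofChar} in that variable over a quarter-sector, then repeating in each of the remaining variables. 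Here condition $(\alpha)$ converts the per-variable estimates into a single bound $e^{A \lambda \omega(|z|)}$ through the subadditivity $\omega(|z|) \lesssim \sum_{j} \omega(|z_{j}|) + 1$, after which the Bargmann characterization returns $f \in \H_{\omega}(\R^{d})$ (resp. $\H_{0, \omega}(\R^{d})$).

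The main obstacle is the bookkeeping in the iterated Phragm\'{e}n--Lindel\"{o}f step. Each application of Theorem \ref{t:PhragmenLindelofChar} in the variable $z_{j}$ must treat the frozen variables as parameters, so the boundary constant $M$ and the subexponential majorant $e^{\varepsilon |z_{j}|^{\sigma}}$ acquire dependence on the remaining coordinates; one has to verify that this dependence is again of the admissible weighted form and that it does not degrade across the $d$ successive steps. This is precisely where the \emph{absolute} nature of the constants in Theorem \ref{t:PhragmenLindelofChar}---with $A$ depending only on $\omega$ and $\sigma$---is essential, since it prevents a geometric blow-up of constants over the $d$ iterations, and the convexity of $\varphi^{*}$ guarantees that the combined estimate remains within the same weight class.
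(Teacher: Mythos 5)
Your proposal follows essentially the same route as the paper's (sketched) proof: multidimensional analogs of Proposition \ref{p:PilipovicBargmanChar} and of Lemmas \ref{l:TFDecayToBargmann} and \ref{l:BargmannToTFDecay} obtained through the identity $V_{\phi}[\F_{\varpi}f](x,\xi) = e^{-i\ev{\varpi}{x\odot\xi}}V_{\phi}f(U_{\varpi}(x,\xi))$, an a priori bound $|\B f(z)|\lesssim e^{\varepsilon|z|^{2}}$ coming from the multidimensional version of Theorem \ref{t:TFCharH01/2}, and a recursive, variable-by-variable application of the one-dimensional Phragm\'{e}n--Lindel\"{o}f result. You also correctly single out exactly the two points the paper flags as what makes the recursion work: the absoluteness of the constants $A$ and $B$ in Theorem \ref{t:PhragmenLindelofChar}, and condition $(\alpha)$ together with the convexity of $\varphi^{*}$ to recombine the per-variable estimates into a single weighted bound.
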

	
Once again, we find that $\H_{s}(\R^{d}) = \H_{\omega_{s}}(\R^{d})$ and $\H_{0, s}(\R^{d}) = \H_{0, \omega_{s}}(\R^{d})$ for $s \in \overline{\R}_{\flat}$, $s \leq 1/2$, and $\omega_{s}$ as in \eqref{eq:WeightFuncClassicalPilipovicSp},
where $\H_{s}(\R^{d})$ and $\H_{0, s}(\R^{d})$ are the $d$-dimensional proper Pilipovi\'{c} spaces as introduced in \cite{T-ImagFuncDistSpBargmannTrans}, see Example \ref{ex:ClassicalPilipovicSp}.
Here, we have defined $\H_{0, 0}(\R^{d}) = \spn \{ \phi \}$.
Then the ensuing corollary gives again a significant improvement to
 \cite[Theorem 2.2 and Theorem 2.3]{T-G-FourierCharPilipovicSp}.
	
	\begin{corollary}
		\label{t:TFCharClassicalPilipovicSp}
		Let $s \in \overline{\R}_{\flat}$ be such that $0 \leq s < 1/2$ (such that $0 \leq s \leq 1/2$).
		For any $f \in \H^{\prime}_{0}(\R^{d})$, then
		$f \in \H_{s}(\R^{d})$ ($f \in \H_{0, s}(\R^{d})$) if and only if
			\[ |\F_{\varpi}\{f\}(\xi)| \lesssim e^{-\frac{1}{2} |\xi|^{2} + \lambda \omega_{s}(|\xi|)} ,  \qquad \varpi \in \{0, 1\}^{d}, ~ \xi \in \R^{d} , \]
		for some $\lambda > 0$ (for every $\lambda > 0$), where  the weight function $\omega_{s}$ is given by \eqref{eq:WeightFuncClassicalPilipovicSp}.
	\end{corollary}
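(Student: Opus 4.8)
The plan is to deduce the corollary from the two multidimensional theorems proved just above (the analogues of Theorems~\ref{t:TFCharH01/2} and~\ref{t:FourierCharProperPilipovic}), using the weight functions $\omega_s$ of \eqref{eq:WeightFuncClassicalPilipovicSp} as the bridge. The first ingredient is the identification $\H_s(\R^d)=\H_{\omega_s}(\R^d)$ and $\H_{0,s}(\R^d)=\H_{0,\omega_s}(\R^d)$ for $s\in\overline{\R}_{\flat}$ with $s\leq 1/2$, which is the $d$-dimensional counterpart of Example~\ref{ex:ClassicalPilipovicSp} and is asserted in the text preceding the statement; it rests on the same asymptotic computation of $\sqrt{\alpha!}\,e^{-\frac1r\varphi_s^\ast(r|\alpha|)}$ carried out there, now with $|\alpha|$ in place of $n$. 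Granting this, the whole corollary is reduced to characterizing membership in the weighted spaces $\H_{\omega_s}(\R^d)$ and $\H_{0,\omega_s}(\R^d)$.

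With the identification in hand, I would first verify that each $\omega_s$ with $0\leq s<1/2$ satisfies the hypotheses $(\beta_2^\ast)$ and $(\delta)$ demanded by the multidimensional analogue of Theorem~\ref{t:FourierCharProperPilipovic}; these are exactly the properties recorded in Example~\ref{ex:ClassicalPilipovicSp}, where in particular $(\beta_2^\ast)$ is noted to hold precisely when $s<1/2$. Applying that theorem with $\omega=\omega_s$ then yields both assertions in the range $0\leq s<1/2$: the non-parenthetical statement ($f\in\H_s(\R^d)$ iff the decay $|\F_\varpi\{f\}(\xi)|\lesssim e^{-\frac12|\xi|^2+\lambda\omega_s(|\xi|)}$ holds for some $\lambda>0$) comes from the ``for some $\lambda$'' branch, while the parenthetical statement ($f\in\H_{0,s}(\R^d)$ iff it holds for every $\lambda>0$) comes from the ``for every $\lambda$'' branch.

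The only case left uncovered is the endpoint $s=1/2$, which occurs solely in the parenthetical ($\H_{0,s}$) statement. This is the main obstacle: here $\omega_{1/2}(t)=t^2$ fails $(\beta_2^\ast)$, so the second theorem is simply unavailable, and this breakdown is exactly the reason a dedicated result for $\H_{0,1/2}$ is needed. I would resolve it by substituting $\omega_{1/2}(|\xi|)=|\xi|^2$ into the displayed bound, whereupon $e^{-\frac12|\xi|^2+\lambda\omega_{1/2}(|\xi|)}$ becomes $e^{-(\frac12-\lambda)|\xi|^2}$; requiring this for every $\lambda>0$ is precisely the hypothesis of the multidimensional analogue of Theorem~\ref{t:TFCharH01/2}, which identifies that class with $\H_{0,1/2}(\R^d)$. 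This disposes of $s=1/2$ and completes the argument.

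Finally, I would keep an eye on the degenerate endpoint $s=0$, where $\omega_0\asymp\log^+$, condition $(\gamma)$ fails, and $\varphi_s^\ast$ is infinite-valued, so that the spaces collapse to $\H_0(\R^d)$ and $\spn\{\phi\}$ respectively. No separate treatment is required, however: these cases are governed by the conventions of Remark~\ref{rk:infphi} and are already absorbed into the theorems being invoked, so the reduction above applies unchanged.
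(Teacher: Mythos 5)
Your proposal is correct and follows essentially the same route the paper intends: the paper derives this corollary by combining the two multidimensional theorems stated just before it with the identification $\H_{s}(\R^{d})=\H_{\omega_{s}}(\R^{d})$, $\H_{0,s}(\R^{d})=\H_{0,\omega_{s}}(\R^{d})$ coming from Example \ref{ex:ClassicalPilipovicSp} (exactly as in the one-dimensional case, where the paper notes the corollary ``automatically follows'' from that example once the main theorems are proved). Your case split --- second theorem for $0\leq s<1/2$ since $\omega_{s}$ then satisfies $(\beta_{2}^{\ast})$ and $(\delta)$, first theorem for the endpoint $s=1/2$ where $\omega_{1/2}(t)=t^{2}$ fails $(\beta_{2}^{\ast})$, and the conventions of Remark \ref{rk:infphi} for $s=0$ --- is precisely the intended argument.
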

	
We end this article by mentioning that in the two specific cases of $\H_{0}(\R^{d})$ and $\H_{0, 0}(\R^{d})$, the previous result may be further improved by only requiring appropriate decay of the function and its $d$-dimensional Fourier transform.
Indeed, for the case of $\H_{0, 0}(\R^{d})$ this is trivial, while that of $\H_{0}(\R^{d})$ follows from \cite[Theorem 1.3]{BDJ2003}.
It would be interesting to determine whether or not similar results are also true for the other spaces in the above theorems.

\end{document}